\newcommand{\banacha}{\mathbb X}
\newcommand{\banachb}{\mathbb Y}
\newtheorem{theorem}{Theorem}
\newtheorem{lemma}[theorem]{Lemma}
\newtheorem{corollary}[theorem]{Corollary}
\newtheorem{proposition}[theorem]{Proposition}
\newtheorem{remark}{Remark}
\def\argmin{\operatorname{argmin}}
\begin{document}
\title{Local convergence analysis of a proximal Gauss-Newton method under a majorant condition}

\author{ G. Bouza Allende\thanks{Facultad de Matem\'atica y Computaci\'on, Universidad de la Habana, CU (Email:
{\tt gema@matcom.uh.cu}). The author was partly supported by CAPES.} \and
M. L. N. Gon\c calves\thanks{IME/UFG, Campus II- Caixa
    Postal 131, CEP 74001-970 - Goi\^ania, GO, Brazil (E-mail:{\tt
      maxlng@mat.ufg.br}). The author was partly supported by
CAPES.}  }
 \maketitle
\begin{abstract}
In this paper, the proximal Gauss-Newton method for solving
penalized nonlinear least squares problems is studied.  A local convergence
analysis is obtained under the assumption that
the derivative of the function associated with the penalized least
square problem satisfies a majorant condition. Our analysis provides a clear relationship
between the majorant function and the  function associated with
the penalized least squares problem. The convergence for two
important special cases is also derived.
\end{abstract}

\noindent {{\bf Keywords:} Penalized nonlinear least squares
problems; Proximal Gauss-Newton method; Majorant condition; Local
convergence.}

\maketitle
\section{Introduction}\label{sec:int}

We consider the {\it penalized nonlinear least squares} problem
\begin{equation}\label{eq:p}
\min_{x\in \Omega } \;\frac{1}{2}\|F(x)\|^2+J(x),
\end{equation}
where $\banacha$ and $\banachb$ are real or complex Hilbert spaces,
$\Omega\subseteq\banacha$ an open set, $F:\Omega\to \banachb$ is a
continuously differentiable  nonlinear function and $J:\Omega\to
\mathbb{R}\cup\{+\infty\}$ is a proper, convex and  lower
semicontinuous  functional. A wide variety of applications can be found in mathematical programming literature, see
for example \cite{A1,A3,A2}. In particular, if $J(x)=0$, for all $x\in \Omega$, the problem \eqref{eq:p}
becomes the classical nonlinear least squares problem studied in
\cite{MAX1,MAX3, MAX4,MAX5}. In this case,
 a generalization of the  Newton method called
the Gauss-Newton method, can be used. This iterative algorithm computes the sequence
$$
x_{k+1}={x_k}- F'(x_k)^{\dagger}F(x_k), \qquad k=0,1,\ldots,
$$
where $F'(x_k)^{\dagger}$ denotes the Moore-Penrose inverse of the
linear operator $F'(x_k)$.

In this paper,  we  consider the proximal Gauss-Newton
method, introduced in~\cite{A3}, for solving~\eqref{eq:p}. This method extends the classical Gauss-Newton approach. It is defined as
$$
x_{k+1}=\mbox{prox}_{J}^{H(x_k)}(x_k-F'(x_k)^{\dagger}F(x_k)),
\qquad k=0,1,\ldots,
$$
where $\mbox{prox}_{J}^{H(x_k)}$ is the proximity operator
associated to $J$ (see \cite{A4,A5,A6,A3}) with respect to the
metric defined by the operator $H(x_k):=F'(x_k)^*F'(x_k)$. It shall be
 mentioned that the computation  of the proximity operator is
in general not straightforward and it may require an iterative
algorithm itself, since, in general, a closed form is not available.

The aim of this  paper is to present a new local convergence analysis
of proximal Gauss-Newton method under a majorant condition. This
majorant formulation follows the   ideas
used in \cite{MAX1,MAX3,MAX2, MAX4,MAX5}. This  analysis
provides a clear relationship between the majorant function, which
relaxes the Lipschitz continuity of $F'$, and the nonlinear operator
$F$ associated with the penalized nonlinear least squares problem.
Two  majorant functions are also considered. In the first case, which corresponds to functions with  Lipschitz derivative, the classical convergence results are  recovered. The  convergence analysis for analytical operators is discussed for the first time.

The convergence of the sequence generated by the proximal Gauss-Newton method was also studied in~\cite{A3}.
There, instead of majorant function,  the Wang's condition, introduced in \cite{W99,XW10}, is used for the analysis.
In fact, it can be shown that these conditions are equivalent. However,  the formulation as a  majorant condition
is better, due to it provides a clear relationship between the majorant function and the nonlinear function $F$ under consideration. Furthermore, the majorant condition simplifies the  proof of the obtained results.

The organization of the paper is as follows. Next, we list some notations and a basic result used in
our presentation. In Section~\ref{pre},  some results on
Moore-Penrose inverse,  proximity operators and the proximal
Gauss-Newton algorithm are discussed. In Section \ref{lkant}, we state the main
result and, for a better organization of the results, it is divided in three parts. First, some properties of the majorant
function are established. Then in Subsection~\ref{sec:MFNLO}, we present the
relationships between the majorant function and the nonlinear
function $F$.  Finally, in the last part  our main result is proven.
Section \ref{sec:ec} is devoted to show the consequences of this result in particular cases.

\subsection{Notation and auxiliary results} \label{sec:int.1}
The following notations and results are used throughout our
presentation.   Let $\banacha$ and $\banachb$ be Hilbert spaces. The
open and closed balls in $\banacha$  with center $a$ and radius $r$ are
denoted, respectively by
$B(a,r)$ and $ B[a,r].$ For simplicity, given $x \in \banacha$,   we use the short notation
$$\sigma(x):=\|x-x_*\|.$$
From now on, $\Omega\subseteq\banacha$ an open set,
$J:\Omega\to \mathbb{R}\cup\{+\infty\}$ is a proper, convex and
lower semicontinuous  functional and $F:\Omega\to
\banachb$ is a continuously differentiable function such that $F'$ has a closed image
in~$\Omega$.
 We use  $\mathcal L (\banacha,
\banachb)$ to denote the space of bounded linear operators from
$\banacha$ to $\banachb$ and $\mbox{I}_{\banacha}$ corresponds to the
identity operator on $\banacha$. Finally, if $A \in \mathcal L
(\banacha, \banachb)$, then  $Ker(A)$ and $im(A)$ are the kernel and
image of A, respectively, and $A^*$ its adjoint operator.

The following auxiliary results of elementary convex analysis
will be needed:\begin{proposition}\label{pr:conv.aux1} Let
$\epsilon>0$  and  $\tau \in [0,1]$. If $\varphi:[0,\epsilon)
\rightarrow\mathbb{R}$ is convex, then \begin{itemize}
                                        \item The function $l:(0,\epsilon) \to
\mathbb{R}$ defined by
$$
l(t)=\frac{\varphi(t)-\varphi(\tau t)}{t},
$$
is monotone increasing.
\item $
D^+ \varphi(0)={\lim}_{u\to 0+} \; \frac{\varphi(u)-\varphi(0)}{u}
={\inf}_{0<u} \;\frac{\varphi(u)-\varphi(0)}{u}. $
\end{itemize}
\end{proposition}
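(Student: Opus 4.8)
The plan is to reduce both statements to the elementary monotonicity of difference quotients of a convex function. The key fact I would establish first is the \emph{slope monotonicity}: for $\varphi$ convex on $[0,\epsilon)$ and $a<b<c$ in this interval, one has
\[
\frac{\varphi(b)-\varphi(a)}{b-a}\le\frac{\varphi(c)-\varphi(a)}{c-a}\le\frac{\varphi(c)-\varphi(b)}{c-b}.
\]
This follows directly from the defining inequality of convexity by writing the middle point as a convex combination $b=\lambda a+(1-\lambda)c$. Equivalently, the difference quotient $s(a,b):=(\varphi(b)-\varphi(a))/(b-a)$ is nondecreasing separately in $a$ (with $b$ fixed) and in $b$ (with $a$ fixed). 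Both items of the proposition are then consequences of this single observation.

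For the first item, I would treat the trivial case $\tau=1$ separately, where $l\equiv 0$ is constant and hence nondecreasing. For $\tau\in[0,1)$ I would rewrite
\[
l(t)=\frac{\varphi(t)-\varphi(\tau t)}{t}=(1-\tau)\,\frac{\varphi(t)-\varphi(\tau t)}{t-\tau t}=(1-\tau)\,s(\tau t,t).
\]
Since $1-\tau\ge 0$ is a fixed nonnegative constant, it suffices to show that $t\mapsto s(\tau t,t)$ is nondecreasing. Given $0<t_1<t_2<\epsilon$, I would chain the slope monotonicity in each argument:
\[
s(\tau t_1,t_1)\le s(\tau t_1,t_2)\le s(\tau t_2,t_2),
\]
where the first inequality increases the second argument from $t_1$ to $t_2$ with the first argument fixed at $\tau t_1$, and the second increases the first argument from $\tau t_1$ to $\tau t_2$ with the second argument fixed at $t_2$.

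For the second item, I would apply the same monotonicity with the first argument fixed at $0$: the map $u\mapsto s(0,u)=(\varphi(u)-\varphi(0))/u$ is nondecreasing on $(0,\epsilon)$. A nondecreasing real function admits a right limit at the left endpoint of its domain equal to its infimum, so $\lim_{u\to0+}s(0,u)=\inf_{u>0}s(0,u)$; by definition this common value is $D^+\varphi(0)$, which yields the claimed chain of equalities (allowing the value $-\infty$, since $0$ is the left endpoint of the domain). The whole argument is routine; the only point requiring care is verifying the ordering of the arguments in the slope chain of the first item, namely $\tau t_1\le\tau t_2\le t_2$ and $\tau t_1\le t_1<t_2$, so that each difference quotient is well defined and the one-variable monotonicity genuinely applies.
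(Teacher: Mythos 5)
Your proof is correct, but it takes a different route from the paper in a specific sense: the paper does not prove this proposition at all --- its ``proof'' is a citation to Theorem~4.1.1 and Remark~4.1.2 of Hiriart-Urruty and Lemar\'echal \cite{HL93}, which record precisely the increasing-slope property of one-variable convex functions and the resulting identity $D^+\varphi(0)=\lim_{u\to 0+}(\varphi(u)-\varphi(0))/u=\inf_{u>0}(\varphi(u)-\varphi(0))/u$. What you have done is supply the underlying argument yourself: the three-chord (slope monotonicity) lemma, the reduction $l(t)=(1-\tau)\,s(\tau t,t)$, the chain $s(\tau t_1,t_1)\le s(\tau t_1,t_2)\le s(\tau t_2,t_2)$ for $0<t_1<t_2<\epsilon$, and the standard fact that a nondecreasing function has a right limit at the left endpoint of its domain equal to its infimum. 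Your treatment of the edge cases is also right: $\tau=1$ gives $l\equiv 0$; $\tau=0$ turns the second inequality of the chain into an equality; you correctly read ``monotone increasing'' in the nondecreasing sense (strict monotonicity would be false, as an affine $\varphi$ shows); and allowing the value $-\infty$ for $D^+\varphi(0)$ is the honest reading, since $0$ is the left endpoint. The comparison, then, is self-containedness versus delegation: the textbook result cited by the paper covers the second item and essentially the $\tau=0$ case of the first item, whereas general $\tau\in[0,1]$ needs the two-argument monotonicity of the slope function $s$ --- a small but genuine extension that your proof makes explicit and the paper leaves implicit in the citation.
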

\begin{proof} See Theorem 4.1.1 and Remark 4.1.2,  pp. 21 of  \cite{HL93}.
\end{proof}
\section{Preliminary}\label{pre}
In this section  some results on  Moore-Penrose
inverse and proximity operators will be presented. Then, the
algorithm to solve problem~\eqref{eq:p} and  some properties
related to it will be introduced.
\subsection{Generalized inverses}
In this section some results on  Moore-Penrose
inverse, will be presented. More details can be found in \cite{G1,W1}.

Let $A\in \mathcal L (\banacha, \banachb) $  with a closed image. The
Moore-Penrose inverse of $A$
 is the linear operator $A^\dagger \in \mathcal L (\banachb, \banacha)$  which satisfies:
$$AA^{\dagger}A=A, \quad A^{\dagger}AA^{\dagger}=A^{\dagger}, \quad (AA^{\dagger})^{*}=AA^{\dagger}, \quad (A^{\dagger}A)^{*}=A^{\dagger}A. $$
From the definition of the Moore-Penrose inverse, it is easy to see
that
\begin{equation}\label{1111}
A^{\dagger}A= \mbox{I}_{\banacha}-\Pi_{Ker(A)}, \qquad AA^{\dagger}=
\Pi_{im(A)},
\end{equation}
 where $\Pi_{E}$ denotes the projection of $\banacha$ onto subspace E.

 If $A$ is injective, then
 \begin{equation}\label{121212}
A^{\dagger}=(A^*A)^{-1}A^{*}, \qquad A^{\dagger}A=I_{\banacha}, \qquad \|A^{\dagger}\|^2=\|(A^*A)^{-1}\|.
\end{equation}
We end this part with a result concerning the variation of the pseudo-inverse, see \cite{A3,G1,W1}.
\begin{lemma} \label{lem:ban}
Let $A, B \in \mathcal L (\banacha, \banachb) $ with closed images.
If $A$ is injective and $\|A^\dagger\|\|A-B\|<1$, then
 $B$ is injective and
$$
\|B^\dagger\|\leq \frac{\|A^\dagger\|}{ 1- \|A^\dagger\|\|A-B\|},
\qquad \|B^\dagger-A^\dagger\|\leq
\frac{\sqrt{2}\|A^\dagger\|^2\|A-B\|}{ 1- \|A^\dagger\|\|A-B\|}.
$$
\end{lemma}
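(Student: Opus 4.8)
The plan is to prove the three assertions in turn: injectivity of $B$, the bound on $\|B^\dagger\|$, and the bound on $\|B^\dagger - A^\dagger\|$. Throughout I write $E:=B-A$, so that $\|E\|=\|A-B\|$ and the hypothesis reads $\|A^\dagger\|\,\|E\|<1$.

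First, for injectivity and the bound on $\|B^\dagger\|$, I would exploit that $A$ is injective with closed image, so by \eqref{121212} one has $A^\dagger A=I_{\banacha}$ and $\|A^\dagger\|^2=\|(A^*A)^{-1}\|$. From $A^\dagger A=I_{\banacha}$ it follows that $\|x\|=\|A^\dagger A x\|\le\|A^\dagger\|\,\|Ax\|$ for every $x$, i.e. $\|Ax\|\ge\|x\|/\|A^\dagger\|$. Combining this with $\|Bx\|\ge\|Ax\|-\|Ex\|$ gives
$$\|Bx\|\ge\left(\frac{1}{\|A^\dagger\|}-\|E\|\right)\|x\|=\frac{1-\|A^\dagger\|\,\|E\|}{\|A^\dagger\|}\,\|x\|,$$
whose coefficient is positive by hypothesis, so $Bx=0$ forces $x=0$ and $B$ is injective. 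Since $B$ has closed image, \eqref{121212} applies to $B$ as well; writing $\|B^\dagger\|^{-1}=\inf_{\|x\|=1}\|Bx\|$ (which follows from $\|B^\dagger\|^2=\|(B^*B)^{-1}\|$), the displayed lower bound yields at once $\|B^\dagger\|\le\|A^\dagger\|/(1-\|A^\dagger\|\,\|E\|)$.

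For the second estimate, the crucial step is the perturbation identity
$$B^\dagger-A^\dagger=-A^\dagger E B^\dagger+(A^*A)^{-1}E^*\left(I_{\banachb}-BB^\dagger\right).$$
I would derive it by splitting $B^\dagger-A^\dagger=(B^\dagger-A^\dagger BB^\dagger)+(A^\dagger BB^\dagger-A^\dagger)$, using $A^\dagger A=I_{\banacha}$ to rewrite the first group as $A^\dagger(A-B)B^\dagger=-A^\dagger E B^\dagger$, and using \eqref{1111} together with $B^*(I_{\banachb}-BB^\dagger)=0$ and $A^*=B^*-E^*$ to rewrite the second group as $(A^*A)^{-1}E^*(I_{\banachb}-BB^\dagger)$. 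The two summands are then estimated separately: the first by $\|A^\dagger\|\,\|E\|\,\|B^\dagger\|\le\|A^\dagger\|^2\|E\|/(1-\|A^\dagger\|\,\|E\|)$ using the bound already obtained, and the second by $\|(A^*A)^{-1}\|\,\|E\|=\|A^\dagger\|^2\|E\|\le\|A^\dagger\|^2\|E\|/(1-\|A^\dagger\|\,\|E\|)$, since $I_{\banachb}-BB^\dagger$ is a projection of norm at most one. Choosing this particular form of the identity (expanded around $A$) is what makes $(A^*A)^{-1}$, with the clean norm $\|A^\dagger\|^2$, appear in the second term.

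The main obstacle is to extract the sharp constant $\sqrt2$ rather than the $2$ produced by a bare triangle inequality. I would handle this through an orthogonal decomposition: since $B^\dagger(I_{\banachb}-BB^\dagger)=0$, the first summand annihilates $im(B)^\perp$, while the factor $I_{\banachb}-BB^\dagger$ makes the second summand annihilate $im(B)$. Hence, decomposing any $y\in\banachb$ as $y=BB^\dagger y+(I_{\banachb}-BB^\dagger)y=:y_1+y_2$ into orthogonal components, the two summands act on orthogonal pieces of $y$, and with $c:=\|A^\dagger\|^2\|E\|/(1-\|A^\dagger\|\,\|E\|)$ one gets $\|(B^\dagger-A^\dagger)y\|\le c\|y_1\|+c\|y_2\|\le c\sqrt2\sqrt{\|y_1\|^2+\|y_2\|^2}=\sqrt2\,c\,\|y\|$ by Cauchy--Schwarz and $\|y_1\|^2+\|y_2\|^2=\|y\|^2$, which is exactly the claimed bound. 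I expect the only delicate points to be the careful verification of the perturbation identity and the bookkeeping ensuring both summands share the coefficient $c$ before the Cauchy--Schwarz step.
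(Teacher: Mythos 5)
Your proof is correct, but there is nothing in the paper to compare it with: the paper does not prove Lemma~\ref{lem:ban} at all, it simply quotes it from the literature (Stewart \cite{G1}, Wedin \cite{W1}, Salzo--Villa \cite{A3}). What you have supplied is a correct, self-contained version of the classical Wedin-type perturbation argument specialized to injective operators, and every step checks out. The lower bound $\|Bx\|\ge\bigl(1-\|A^\dagger\|\|A-B\|\bigr)\|x\|/\|A^\dagger\|$ gives injectivity, and the identification $\|B^\dagger\|^{-1}=\inf_{\|x\|=1}\|Bx\|$ is indeed a consequence of \eqref{121212}, since $\|(B^*B)^{-1}\|^{-1}=\min\operatorname{spec}(B^*B)=\inf_{\|x\|=1}\langle B^*Bx,x\rangle$; this yields the first estimate. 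Your perturbation identity $B^\dagger-A^\dagger=-A^\dagger(B-A)B^\dagger+(A^*A)^{-1}(B-A)^*(I_{\banachb}-BB^\dagger)$ is valid: the second group collapses correctly because $B^*(I_{\banachb}-BB^\dagger)=\bigl[(I_{\banachb}-BB^\dagger)B\bigr]^*=0$ and $A^\dagger=(A^*A)^{-1}A^*$. The extraction of $\sqrt{2}$ is also legitimate: the first term annihilates $im(B)^\perp$ (as $B^\dagger(I_{\banachb}-BB^\dagger)=0$) while the second annihilates $im(B)$, so Pythagoras plus Cauchy--Schwarz replaces the factor $2$ that a bare triangle inequality would produce --- this is precisely how the classical proofs obtain the sharp constant. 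Two choices in your write-up deserve note: expanding the identity around $A$ rather than around $B$ (Wedin's formulation is usually the mirror image) is what makes the cleanly-normed factor $\|(A^*A)^{-1}\|=\|A^\dagger\|^2$ appear in the second term, avoiding a second invocation of the $\|B^\dagger\|$ bound; and bounding that second term by $c$ via $1\le 1/(1-\|A^\dagger\|\|A-B\|)$ is exactly the bookkeeping needed so that both summands carry the same coefficient before the orthogonal-splitting step.
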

\subsection{Proximity operators}
Proximity operators  were introduced by Moreau and their use in
signal theory goes back to \cite{A4}. We briefly recall some essential facts below and
refer the reader to \cite{A4,A5,A3} for more details.

Let $H:\banacha\to \banacha$ be a  continuously, positive and
selfajoint, bounded from below and, therefore, invertible operator.
Then we have a new scalar product on $\banacha$ by setting $\langle
x,z \rangle_{H}=\langle x,Hz \rangle$. Hence, the corresponding
induced norm $\|.\|_H$ is equivalent to the given norm on
$\banacha$, since the following inequalities hold
$$
 \frac{1}{\|H^{-1}\|}\|x\|^2\leq\|x\|^2_H\leq \|H\|\|x\|^2.
$$
The {\it Moreau-Yosida aproximation} of $J$  with respect to the scalar product
induced by $H$ is the functional $M_j: \banacha \to \mathbb{R}$
defined by setting
\begin{equation}\label{a11}
M_J(z)=\inf_{x\in
\banacha}\left\{J(x)+\frac{1}{2}\|x-z\|^2_H\right\}.
\end{equation}
Recalling that $J$ is a convex, lower
semicontinuous and proper function $J:\banacha \to
\mathbb{R}\cup\{+\infty\}$, it is easy to prove that the infimum of the last equation is attained at a
unique point. Therefore, let us call $\mbox{prox}_{J}^{H}(z)$, the {\it
proximity operator} associated to $J$ and $H$
\begin{equation} \label{pro}
  \begin{array}{rcl}
  \mbox{prox}_{J}^{H}:\banacha & \to &\banacha\\
    z&\mapsto& M_J(z)=\argmin_{x \in \banacha}\left\{J(x)+\frac{1}{2}\|x-z\|^2_H\right\}.
  \end{array}
\end{equation}
Writing the first order
optimality conditions for \eqref{a11}, we obtain that
$$
p=\mbox{prox}_{J}^{H}(z)\leftrightarrow 0\in \partial J(p)+H(p-z)\leftrightarrow Hz\in (\partial J+H)(p),
$$
which using that the minimum in \eqref{a11} is attained at a unique
point leads to
$$\mbox{prox}_{J}^{H}(z)=(\partial J+H)^{-1}(Hz).$$
This part ends with an important property of proximity operator.
\begin{lemma}\label{lemma4}
Let $H_1$ and $H_2$ be two continuous positive selfadjoint operators
on $\banacha$, both bounded from bellow. Then,
$$
\|\mbox{prox}_{J}^{H_1}(z_1)-\mbox{prox}_{J}^{H_2}(z_2)\|\leq  \sqrt{\|H_1\|\|H_1^{-1}\|}\|z_1-z_2\|+\|H^{-1}_1\|\|(H_1-H_2)(z_2-\mbox{prox}_{J}^{H_2}(z_2)\|,
$$
for every  $ z_1, z_2 \in \banacha$.
\end{lemma}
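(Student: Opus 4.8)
The plan is to write $p_1 := \mbox{prox}_{J}^{H_1}(z_1)$ and $p_2 := \mbox{prox}_{J}^{H_2}(z_2)$ and to exploit the first-order characterization of the proximity operator together with the monotonicity of $\partial J$. From the optimality conditions derived just above, $H_i z_i \in (\partial J + H_i)(p_i)$ for $i=1,2$, which is equivalent to $H_i(z_i - p_i) \in \partial J(p_i)$. Since $J$ is convex, $\partial J$ is a monotone operator, so
$$\langle H_1(z_1 - p_1) - H_2(z_2 - p_2),\, p_1 - p_2\rangle \geq 0.$$
This single structural fact is what I would build everything on.

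The second step is purely algebraic. Writing $H_2(z_2 - p_2) = H_1(z_2 - p_2) + (H_2 - H_1)(z_2 - p_2)$ and substituting into the monotonicity inequality, I would isolate the quadratic term $\langle H_1(p_1 - p_2), p_1 - p_2\rangle = \|p_1 - p_2\|_{H_1}^2$ on the left, obtaining
$$\|p_1 - p_2\|_{H_1}^2 \leq \langle z_1 - z_2,\, p_1 - p_2\rangle_{H_1} + \langle (H_1 - H_2)(z_2 - p_2),\, p_1 - p_2\rangle,$$
where I have used $\langle H_1 x, y\rangle = \langle x, y\rangle_{H_1}$ and the selfadjointness of $H_1$.

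The crucial step is estimating the two terms on the right in the correct norms so that the stated constants emerge. For the first term I would apply the Cauchy--Schwarz inequality in the $H_1$-inner product, giving $\|z_1 - z_2\|_{H_1}\|p_1 - p_2\|_{H_1}$. For the cross term I would apply the ordinary Cauchy--Schwarz inequality, bounding it by $\|(H_1 - H_2)(z_2 - p_2)\|\,\|p_1 - p_2\|$, and then pass to the $H_1$-norm via the norm equivalence $\|p_1 - p_2\| \leq \sqrt{\|H_1^{-1}\|}\,\|p_1 - p_2\|_{H_1}$ recalled in this subsection. After dividing through by $\|p_1 - p_2\|_{H_1}$ (the case $p_1 = p_2$ being trivial), this yields
$$\|p_1 - p_2\|_{H_1} \leq \|z_1 - z_2\|_{H_1} + \sqrt{\|H_1^{-1}\|}\,\|(H_1 - H_2)(z_2 - p_2)\|.$$
Converting the left-hand side back through $\|p_1 - p_2\| \leq \sqrt{\|H_1^{-1}\|}\,\|p_1 - p_2\|_{H_1}$ and the first right-hand term through $\|z_1 - z_2\|_{H_1} \leq \sqrt{\|H_1\|}\,\|z_1 - z_2\|$ then reproduces exactly the factors $\sqrt{\|H_1\|\|H_1^{-1}\|}$ and $\|H_1^{-1}\|$ of the claim.

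I expect the main obstacle to be precisely this bookkeeping: one must route each term through the right choice of inner product (standard versus $H_1$) so that the final conversion to the ambient norm produces the asymmetric constants, namely a single factor $\sqrt{\|H_1^{-1}\|}$ attached to the first term and a full $\|H_1^{-1}\|$ attached to the second. All of the norm-equivalence inequalities needed are exactly those recalled in this subsection, so no new tools are required; the only genuine care is in the order in which the Cauchy--Schwarz estimates and the norm equivalences are applied.
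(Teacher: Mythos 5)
Your proof is correct, and it is worth noting that the paper itself does not prove this lemma at all: its ``proof'' is a one-line citation to Remark~4 of the Salzo--Villa reference \cite{A3}. Your argument therefore supplies exactly what the paper outsources, and it does so using only ingredients already recalled in this subsection: the optimality characterization $H_i(z_i-p_i)\in\partial J(p_i)$, monotonicity of $\partial J$, and the norm equivalence $\tfrac{1}{\|H^{-1}\|}\|x\|^2\le\|x\|_H^2\le\|H\|\,\|x\|^2$. I checked the three delicate points and they all go through: (i) the algebraic substitution $H_2(z_2-p_2)=H_1(z_2-p_2)+(H_2-H_1)(z_2-p_2)$ correctly isolates $\|p_1-p_2\|_{H_1}^2$, using selfadjointness of $H_1$ to write $\langle H_1(z_1-z_2),p_1-p_2\rangle=\langle z_1-z_2,p_1-p_2\rangle_{H_1}$; (ii) the asymmetric routing of the two Cauchy--Schwarz estimates (the $H_1$-inner product for the $z_1-z_2$ term, the ambient inner product followed by $\|p_1-p_2\|\le\sqrt{\|H_1^{-1}\|}\,\|p_1-p_2\|_{H_1}$ for the cross term) is what produces the factor $\sqrt{\|H_1^{-1}\|}$ inside the bracket; (iii) the final conversions $\|p_1-p_2\|\le\sqrt{\|H_1^{-1}\|}\,\|p_1-p_2\|_{H_1}$ and $\|z_1-z_2\|_{H_1}\le\sqrt{\|H_1\|}\,\|z_1-z_2\|$ then yield precisely the constants $\sqrt{\|H_1\|\|H_1^{-1}\|}$ and $\|H_1^{-1}\|$ of the statement, with the degenerate case $p_1=p_2$ handled trivially. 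This is presumably the same monotonicity-based argument as in the cited reference, but as far as this paper is concerned your proposal is strictly more informative than what is printed.
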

\begin{proof}
 See Remark~4 in \cite{A3}.
 \end{proof}
\subsection{The proximal Gauss-Newton method}
In this section we present the algorithm to solve \eqref{eq:p} as
well as  some related properties.

The goal of this method,  introduced in \cite{A3}, is to find stationary points of
problem \eqref{eq:p}  as follows:
\begin{equation}\label{aa1}
x_{k+1}=\mbox{prox}_{J}^{H(x_k)}(x_k-F'(x_k)^{\dagger}F(x_k)),
\qquad k=0,1,\ldots,
\end{equation}
where $H(x_k)=F'(x_k)^*F'(x_k)$ and $\mbox{prox}_{J}^{H(x_k)}$ is
the proximity operator associated to $J$ and $H(x_k)$ as defined in
\eqref{pro}.
\begin{remark}
As proved in Propositon~6 of \cite{A3}, given $x_k\in
\banacha$, if $F'(x_k)$ is injective with closed image, then
$x_{k+1}$ satisfies
$$x_{k+1}=\mbox{arg}\min_{x \in \banacha} \frac{1}{2}\|F(x_n)+F'(x_k)(x-x_k)\|^2+J(x).$$
This problem  can be solved using first order methods for the minimization
of nonsmooth convex functions, such as bundle methods or
forward-backward methods (see \cite{Com1,HL94}). We will use the proximal point
formulation because the theoretical results of this area will be very useful
for the proof of the convergence of the
method.
\end{remark}

In the following, we establish the connection between the stationary point of the function defined in~\eqref{eq:p} and the fixed points of proximal point operator.

\begin{proposition}\label{pro:a1}
Let $x_*\in \Omega$ such that  $-F'(x_*)^*F(x_*)\in \partial J(x_*)$,
i.e., $x_*$ satisfies the first order conditions for local
minimizers of \eqref{eq:p}. Assume that $F'(x_*)$ is injective and
$im(F'(x_*))$ is closed, then $x_*$ satisfies the fixed point
equation
$$x_*=\mbox{prox}_{J}^{H(x_*)}(x_*-F'(x_*)^{\dagger}F(x_*)),$$
where $H(x_*)=F'(x_*)^*F'(x_*)$.
\end{proposition}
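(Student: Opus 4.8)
The plan is to invoke directly the optimality characterization of the proximity operator established just before the statement, namely that for an admissible metric operator $H$ one has $p=\mbox{prox}_{J}^{H}(z)$ if and only if $0\in\partial J(p)+H(p-z)$. Before applying it, I would first check that $H(x_*)=F'(x_*)^*F'(x_*)$ is an admissible metric, i.e. continuous, selfadjoint, positive and bounded from below (hence invertible): selfadjointness and positivity are immediate, while injectivity of $F'(x_*)$ together with the closedness of $im(F'(x_*))$ yields a bound $\|F'(x_*)u\|\geq c\|u\|$ with $c>0$, so that $\langle H(x_*)u,u\rangle=\|F'(x_*)u\|^2\geq c^2\|u\|^2$. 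Thus the proximity operator, and hence the fixed-point equation in the statement, is well defined.

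With $z:=x_*-F'(x_*)^{\dagger}F(x_*)$ and $p:=x_*$, the characterization reduces the claim to verifying the single inclusion $0\in\partial J(x_*)+H(x_*)(x_*-z)$. I would compute $x_*-z=F'(x_*)^{\dagger}F(x_*)$, so that $H(x_*)(x_*-z)=F'(x_*)^*F'(x_*)F'(x_*)^{\dagger}F(x_*)$.

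The key algebraic step, and the only point requiring care, is the simplification $F'(x_*)^*F'(x_*)F'(x_*)^{\dagger}=F'(x_*)^*$. This follows purely from the defining Moore-Penrose relations: taking adjoints in $AA^{\dagger}A=A$ gives $A^*(AA^{\dagger})^*=A^*$, and since $(AA^{\dagger})^*=AA^{\dagger}$ is selfadjoint this is exactly $A^*AA^{\dagger}=A^*$. Equivalently, $AA^{\dagger}=\Pi_{im(A)}$ by \eqref{1111} and $A^*$ annihilates $im(A)^{\perp}=Ker(A^*)$. Applying this identity with $A=F'(x_*)$ yields $H(x_*)(x_*-z)=F'(x_*)^*F(x_*)$.

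Substituting back, the inclusion to be verified becomes $0\in\partial J(x_*)+F'(x_*)^*F(x_*)$, that is $-F'(x_*)^*F(x_*)\in\partial J(x_*)$, which is precisely the standing hypothesis that $x_*$ is a first-order stationary point of \eqref{eq:p}. Hence $x_*=\mbox{prox}_{J}^{H(x_*)}(z)$, as claimed. I do not anticipate a serious obstacle here; the whole argument hinges on recognizing that the metric $H(x_*)$ collapses the Moore-Penrose inverse back to the adjoint, turning the prox optimality condition into the given stationarity condition.
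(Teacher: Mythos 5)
Your proof is correct, and it is essentially the argument the paper intends: the paper's ``proof'' merely defers to Proposition~5 of \cite{A3}, whose reasoning is exactly what you carry out --- verify that $H(x_*)$ is an admissible metric, apply the optimality characterization $p=\mbox{prox}_{J}^{H}(z)\leftrightarrow 0\in\partial J(p)+H(p-z)$ stated just before Lemma~\ref{lemma4}, and collapse $F'(x_*)^*F'(x_*)F'(x_*)^{\dagger}$ to $F'(x_*)^*$ via the Moore--Penrose relations. Your version has the additional merit of being self-contained, including the coercivity bound $\langle H(x_*)u,u\rangle\geq c^2\|u\|^2$ that justifies the well-definedness of the proximity operator.
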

\begin{proof}
The proof follows the same ideas of the proof of Proposition~5 in
\cite{A3}.
 \end{proof}
\section{Local analysis  for the Gauss-Newton method } \label{lkant}
Our goal is to state and prove a local theorem for the proximal
Gauss-Newton method defined in \eqref{aa1}. First, we  show some
results regarding the scalar majorant function, which relaxes the
Lipschitz condition to $F'$. Then, we establish the main
relationships between the majorant function and the nonlinear
function $F$.  Finally we  obtain that the Gauss-Newton method is well-defined and converges.
 The statement of the theorem is as follows:
\begin{theorem}\label{th:nt}
Let $\Omega\subseteq \banacha$ be an open set,
$J:\Omega\to
\mathbb{R}\cup\{+\infty\}$ a proper, convex and lower semicontinuous
functional and $F:{\Omega}\to
\banachb$ a continuously differentiable function such that $F'$  has a closed image in $\Omega$. Let $x_* \in \Omega,$ $R>0$ and
$$
c:=\|F(x_*)\|, \qquad \beta:=\|F'(x_*)^{\dagger}\|,\qquad \kappa:= \beta\left\|F'(x_*)\right\|\qquad \delta:=\sup \left\{ t\in [0, R): B(x_*, t)\subset\Omega \right\}.
$$
Suppose that $-F'(x_*)^*F(x_*)\in \partial J(x_*)$,
$F '(x_*)$ is injective
and there exists a
continuously differentiable function $f:[0,\; R)\to \mathbb{R}$ such that
\begin{equation}\label{Hyp:MH}
\beta\left\|F'(x)-F'(x_*+\tau (x-x_*))\right\| \leq
f'\left(\sigma(x)\right)-f'\left(\tau\sigma(x)\right),
\end{equation}
where $x\in B(x_*, \delta)$, $ \tau \in [0,1]$ and $\sigma(x)=\|x-x_*\|$, and
\begin{itemize}
  \item[{\bf h1)}]  $f(0)=0$ and $f'(0)=-1$;
  \item[{\bf  h2)}]  $f'$ is convex and  strictly increasing;
   \item[{\bf  h3)}]  $[(1+\sqrt{2})\kappa+1] \,c\, \beta  D^+ f'(0)<1$.
\end{itemize}
Let be given the positive constants $\nu  :=\sup \left\{t \in[0, R):f'(t)<0\right\},$
\[
\rho :=\sup \left\{t \in(0, \nu):\frac{\left[f'(t)+1+\kappa\right]\left[tf'(t)-f(t)+c
 \beta(1+\sqrt{2})(f'(t)+1)\right]+c
 \beta\left[f'(t)+1\right]}{t[f'(t)]^2}<1\right\},
\]
$$ r :=\min  \left\{\rho, \,\delta \right\}.$$
Given $H(x_k)=F'(x_k)^*F'(x_k)$, define  $\mbox{prox}_{J}^{H(x_k)}$ as the
 proximity operator associated to $J$ and $H(x_k)$, see
\eqref{pro}.
Then, the proximal Gauss-Newton method for solving \eqref{eq:p}, with
starting point $x_0\in B(x_*, r)/\{x_*\}$
\begin{equation} \label{eq:DNS}
x_{k+1}=\mbox{prox}_{J}^{H(x_k)}\big({x_k}-F'(x_k)^{\dagger}F(x_k)\big),\qquad
k=0,1\ldots,
\end{equation}
 is well defined, the generated sequence $\{x_k\}$ is
contained in $B(x_*,r)$, converges to $x_*$ and
\begin{multline} \label{eq:q2}
    \|x_{k+1}-x_*\| \leq
    \frac{[f'(\sigma(x_0))+1+\kappa][f'(\sigma(x_0))\sigma(x_0)-f(\sigma(x_0))]}{[\sigma(x_0)f'(\sigma(x_0))]^2}{\|x_k-x_*\|}^{2}+\\ \frac{ (1+\sqrt{2})\beta c [f'(\sigma(x_0))+1]^2}{[\sigma(x_0)f'(\sigma(x_0))]^2}{\|x_k-x_*\|}^{2}+
    \frac{c\beta[(1+\sqrt{2})\kappa+1][f'(\sigma(x_0))+1]}{\sigma(x_0)[f'(\sigma(x_0))]^2}\|x_k-x_*\|,
\end{multline}
 for all $k=0,1,\ldots.$
\end{theorem}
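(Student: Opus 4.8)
My plan is to reduce the statement to a scalar one-step contraction estimate and then close an induction. First I would extract from the majorant inequality~\eqref{Hyp:MH} the three analytic facts the whole argument rests on. Setting $\tau=0$ gives $\beta\|F'(x)-F'(x_*)\|\le f'(\sigma(x))+1$, whose right-hand side is $<1$ whenever $\sigma(x)<\nu$; hence Lemma~\ref{lem:ban} (with $A=F'(x_*)$, $B=F'(x)$) shows that $F'(x)$ is injective for $x\in B(x_*,\nu)$ and gives $\|F'(x)^\dagger\|\le \beta/(-f'(\sigma(x)))$ together with $\|F'(x)^\dagger-F'(x_*)^\dagger\|\le \sqrt2\,\beta\,(f'(\sigma(x))+1)/(-f'(\sigma(x)))$. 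Integrating~\eqref{Hyp:MH} along the segment from $x_*$ to $x$ and using $f(0)=0$ yields the Taylor-remainder bound $\beta\|F(x)-F(x_*)-F'(x)(x-x_*)\|\le \sigma(x)f'(\sigma(x))-f(\sigma(x))$. These are the only places where the operator $F$ enters.

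Next I would carry out the purely scalar preparation. To see that $r>0$ it suffices that $\rho>0$; the quotient defining $\rho$ is a $0/0$ expression at $t=0$, and a short expansion using $f'(0)=-1$, $f(0)=0$ and $(f'(t)+1)/t=(f'(t)-f'(0))/t\to D^+f'(0)$ as $t\to 0^+$ (second bullet of Proposition~\ref{pr:conv.aux1}) shows that its limit equals $[(1+\sqrt2)\kappa+1]\,c\,\beta\,D^+f'(0)$, which is $<1$ by h3; continuity then produces a genuine interval $(0,\rho)$ on which the quotient stays below $1$. For the replacement of $\sigma(x_k)$ by $\sigma(x_0)$ in~\eqref{eq:q2} I would prove that the three coefficients there are nondecreasing on $(0,\nu)$: by the first bullet of Proposition~\ref{pr:conv.aux1} convexity of $f'$ makes $t\mapsto(f'(t)-f'(\tau t))/t$ increasing, so both $t\mapsto(f'(t)+1)/t$ (take $\tau=0$) and $t\mapsto(tf'(t)-f(t))/t^2=\int_0^1(f'(t)-f'(\tau t))/t\,d\tau$ are increasing; since $1/(-f')$ and $f'+1+\kappa$ are positive and increasing on $(0,\nu)$, each coefficient is a product of positive increasing functions, hence increasing.

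The core step is the one-step bound. By Proposition~\ref{pro:a1}, $x_*=\mbox{prox}_J^{H(x_*)}(x_*-F'(x_*)^\dagger F(x_*))$, so with $H_1=H(x_k)$, $H_2=H(x_*)$, $z_1=x_k-F'(x_k)^\dagger F(x_k)$ and $z_2=x_*-F'(x_*)^\dagger F(x_*)$, Lemma~\ref{lemma4} gives
$$\|x_{k+1}-x_*\|\le \sqrt{\|H(x_k)\|\,\|H(x_k)^{-1}\|}\,\|z_1-z_2\|+\|H(x_k)^{-1}\|\,\big\|(H(x_k)-H(x_*))F'(x_*)^\dagger F(x_*)\big\|.$$
By~\eqref{121212}, $\sqrt{\|H(x_k)\|\|H(x_k)^{-1}\|}=\|F'(x_k)\|\,\|F'(x_k)^\dagger\|\le (f'+1+\kappa)/(-f')$ and $\|H(x_k)^{-1}\|=\|F'(x_k)^\dagger\|^2$, where I abbreviate $f'=f'(\sigma(x_k))$. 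For the first summand I would insert $F'(x_k)^\dagger F'(x_k)=\mathrm I$ to write
$$z_1-z_2=-F'(x_k)^\dagger\big[F(x_k)-F(x_*)-F'(x_k)(x_k-x_*)\big]-\big[F'(x_k)^\dagger-F'(x_*)^\dagger\big]F(x_*),$$
and bound the two terms by the remainder estimate and the pseudo-inverse estimate of the first block; this contributes $(f'+1+\kappa)(\sigma f'-f)/[f']^2$ together with the $\sqrt2$-part of the terms carrying $c\beta(f'+1)$. For the second summand I would split $H(x_k)-H(x_*)=F'(x_k)^*(F'(x_k)-F'(x_*))+(F'(x_k)-F'(x_*))^*F'(x_*)$, bounding the first piece through $\|F'(x_k)\|\le(\kappa+f'+1)/\beta$ and the second through $\|F'(x_*)F'(x_*)^\dagger F(x_*)\|\le\|F(x_*)\|=c$ (by~\eqref{1111}); this supplies the remaining $c\beta(f'+1)(f'+\kappa+2)/[f']^2$ and completes the $(1+\sqrt2)$ pattern. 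Collected over the common denominator $[f'(\sigma(x_k))]^2$ these give a one-step inequality whose coefficients are exactly those of~\eqref{eq:q2} evaluated at $\sigma(x_k)$ and which, divided by $\sigma(x_k)$, is precisely the quotient defining $\rho$. I expect this constant bookkeeping in the second summand to be the main obstacle: a careless triangle inequality on $\|H(x_k)-H(x_*)\|$ produces a $2\kappa$ in place of the $\kappa+1$ that is needed, which breaks the match with h3 and with the quotient defining $\rho$.

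Finally the induction. For $x_0\in B(x_*,r)\setminus\{x_*\}$ one has $\sigma(x_0)<r\le\rho\le\nu$, so the one-step inequality reads $\|x_{k+1}-x_*\|\le q(\sigma(x_k))\|x_k-x_*\|$, where $q$ is the quotient defining $\rho$, which is increasing by the second block and satisfies $q(\sigma(x_0))<1$. Inducting on $k$, each $x_k$ is well defined (as $\sigma(x_k)<\nu$ keeps $F'(x_k)$ injective), remains in $B(x_*,r)$, and satisfies $\sigma(x_{k+1})<\sigma(x_k)$; hence $\|x_k-x_*\|\le q(\sigma(x_0))^k\|x_0-x_*\|\to 0$, proving convergence. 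Since $\sigma(x_k)\le\sigma(x_0)$, the monotonicity of the three coefficients then lets me replace $\sigma(x_k)$ by $\sigma(x_0)$ in the one-step inequality, which is exactly~\eqref{eq:q2}.
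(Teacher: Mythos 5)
Your proposal is correct and follows essentially the same route as the paper's own proof: the same fixed-point plus prox-continuity estimate (Proposition~\ref{pro:a1} together with Lemma~\ref{lemma4}), the same decomposition of $G_F(x_k)-G_F(x_*)$ via the linearization error and the pseudo-inverse perturbation bound, the same splitting of $H(x_k)-H(x_*)$ yielding the $(f'+2+\kappa)(f'+1)$ factor, the same scalar monotonicity and limit facts (Propositions~\ref{pr:incr101} and~\ref{pr:incr102}), and the same induction. The only cosmetic differences are the order of the closing steps (you establish the linear contraction before deducing \eqref{eq:q2}, the paper does the reverse) and keeping $F(x_*)$ inside the norm rather than bounding the projection by $1$; neither changes the argument.
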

\begin{remark}
If $J=0$, the proximal Gauss-Newton method becomes the classical
Gauss-Newton method. However, with respect to the  radius of the convergence ball
this result does not correspond with the classical approach, see  Theorem~7
of \cite{MAX3}. The reason is that the upper bound given  in
Lemma~\ref{lemma4} is not affected if  $J=0$.
\end{remark}
\begin{remark}
If the inequality in \eqref{Hyp:MH} holds only for $\tau=0$,   an analogous theorem is true.
In fact, if   the definition of $\rho$ is replaced
{\small
 $$
\rho=\sup\left\{t\in(0,\nu):\frac{[f'(t)+1+\kappa][tf'(t)+f(t)+2t+c
 \beta(1+\sqrt{2})(f'(t)+1)]+c
 \beta[f'(t)+1]}{t[f'(t)]^2}<1\right\},
 $$
}
the well definition of proximity operator, the inclusion of the computed sequence in $B(x^*,r)$ and  its convergence are guaranteed.  In particular:
\begin{multline*}
    \|x_{k+1}-x_*\| \leq
    \frac{[f'(\sigma(x_0))+1+\kappa][f'(\sigma(x_0))\sigma(x_0)+f(\sigma(x_0))+2\sigma(x_0)]}{[\sigma(x_0)f'(\sigma(x_0))]^2}{\|x_k-x_*\|}^{2}+\\ \frac{ (1+\sqrt{2})\beta c [f'(\sigma(x_0))+1]^2}{[\sigma(x_0)f'(\sigma(x_0))]^2}{\|x_k-x_*\|}^{2}+
    \frac{c\beta[(1+\sqrt{2})\kappa+1][f'(\sigma(x_0))+1]}{\sigma(x_0)[f'(\sigma(x_0))]^2}\|x_k-x_*\|,
\end{multline*}
 for all $k=0,1,\ldots.$
\end{remark}
As before,  $H(x_k)=F'(x_k)^*F'(x_k)$ and  $\mbox{prox}_{J}^{H(x_k)}$ is
the proximity operator defined in
\eqref{pro}.  For the zero-residual problems, i.e., $c=0$,  Theorem \ref{th:nt} becomes:
\begin{corollary} \label{col:pc1}
Let $\Omega\subseteq \banacha$ be an open set,
$J:\Omega\to
\mathbb{R}\cup\{+\infty\}$ a proper, convex and lower semicontinuous
functional and $F:{\Omega}\to
\banachb$ a continuously differentiable function such that $F'$  has a closed image in $\Omega$. Let $x_* \in \Omega,$ $R>0$ and
$$
\beta:=\|F'(x_*)^{\dagger}\|,\qquad \kappa:= \beta\left\|F'(x_*)\right\|\qquad \delta:=\sup \left\{ t\in [0, R): B(x_*, t)\subset\Omega \right\}.
$$
Suppose that  $F(x_*)=0$, $0 \in \partial J(x_*)$,
$F '(x_*)$ is injective
and there exists a
continuously differentiable function $f:[0,\; R)\to \mathbb{R}$ such that
$$
\beta\left\|F'(x)-F'(x_*+\tau (x-x_*))\right\| \leq
f'\left(\sigma(x)\right)-f'\left(\tau\sigma(x)\right),
$$
where $x\in B(x_*, \delta)$, $ \tau \in [0,1]$ and $\sigma(x)=\|x-x_*\|$, and
\begin{itemize}
  \item[{\bf h1)}]  $f(0)=0$ and $f'(0)=-1$;
  \item[{\bf  h2)}]  $f'$ is convex and  strictly increasing.
   \end{itemize}
Let be given the positive constants $\nu  :=\sup \left\{t \in[0, R):f'(t)<0\right\},$
\[
\rho :=\sup \left\{t \in(0, \nu):\frac{\left[f'(t)+1+\kappa\right]\left[tf'(t)-f(t)\right]}{t[f'(t)]^2}<1\right\},
\qquad r :=\min  \left\{\rho, \,\delta \right\}.
\]
Then, the proximal Gauss-Newton method for solving \eqref{eq:p}, with
starting point $x_0\in B(x_*, r)/\{x_*\}$
$$
x_{k+1}=\mbox{prox}_{J}^{H(x_k)}\big({x_k}-F'(x_k)^{\dagger}F(x_k)\big),\qquad
k=0,1\ldots,
$$
 is well defined, the generated sequence $\{x_k\}$ is
contained in $B(x_*,r)$, converges to $x_*$ and
$$
    \|x_{k+1}-x_*\| \leq
    \frac{[f'(\sigma(x_0))+1+\kappa][f'(\sigma(x_0))\sigma(x_0)-f(\sigma(x_0))]}{[\sigma(x_0)f'(\sigma(x_0))]^2}{\|x_k-x_*\|}^{2}, \qquad k=0,1,\ldots.
$$
\end{corollary}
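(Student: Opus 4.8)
The plan is to reduce one step of the iteration to the nonexpansiveness estimate for proximity operators (Lemma~\ref{lemma4}), exploiting that $x_*$ is a fixed point of the method. Proposition~\ref{pro:a1} applies under the stated hypotheses, so $x_* = \mbox{prox}_J^{H(x_*)}(x_*-F'(x_*)^\dagger F(x_*))$. Writing $H_1=H(x_k)$, $H_2=H(x_*)$, $z_1=x_k-F'(x_k)^\dagger F(x_k)$ and $z_2=x_*-F'(x_*)^\dagger F(x_*)$, Lemma~\ref{lemma4} gives
$$\|x_{k+1}-x_*\| \le \sqrt{\|H_1\|\,\|H_1^{-1}\|}\,\|z_1-z_2\| + \|H_1^{-1}\|\,\|(H_1-H_2)(z_2-x_*)\|,$$
so the task becomes to estimate each factor in terms of $\sigma(x_k)$ and $f$. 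Before that I would record the properties of the majorant function. Applying Proposition~\ref{pr:conv.aux1} to $\varphi=f'$ with $\tau=0$ shows that $t\mapsto (f'(t)+1)/t$ is nondecreasing with limit $D^+f'(0)$ as $t\to0^+$; an integration gives that $t\mapsto (tf'(t)-f(t))/t^2$ is nondecreasing as well; and, since on $(0,\nu)$ the derivative $f'$ is negative and increasing, the three coefficient functions appearing in \eqref{eq:q2} are each products of nonnegative nondecreasing factors, hence nondecreasing on $(0,\nu)$. This monotonicity is what will later let me replace $\sigma(x_k)$ by $\sigma(x_0)$ in the final bound.

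Next I would build the bridge between $f$ and $F$ on $B(x_*,\delta)$ with $\sigma(x)<\nu$. Taking $\tau=0$ in \eqref{Hyp:MH} yields $\beta\|F'(x)-F'(x_*)\|\le f'(\sigma(x))+1<1$, so Lemma~\ref{lem:ban} applies with $A=F'(x_*)$, $B=F'(x)$: the operator $F'(x)$ is injective with closed image, $\|F'(x)^\dagger\|\le \beta/(-f'(\sigma(x)))$, and $\|F'(x)^\dagger-F'(x_*)^\dagger\|\le \sqrt2\,\beta(f'(\sigma(x))+1)/(-f'(\sigma(x)))$. In particular each inner subproblem is strongly convex, so $x_{k+1}$ is well defined. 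By \eqref{121212}, injectivity gives $\sqrt{\|H(x)\|\,\|H(x)^{-1}\|}=\|F'(x)\|\,\|F'(x)^\dagger\|\le (f'(\sigma(x))+1+\kappa)/(-f'(\sigma(x)))$ and $\|H(x)^{-1}\|=\|F'(x)^\dagger\|^2\le \beta^2/[f'(\sigma(x))]^2$. For the linearization error I would use $F'(x)^\dagger F'(x)=\mbox{I}_{\banacha}$ and $F(x)-F(x_*)=\int_0^1 F'(x_*+t(x-x_*))(x-x_*)\,dt$, so that
$$\|F'(x)^\dagger[F'(x)(x-x_*)-(F(x)-F(x_*))]\| \le \frac{\beta}{-f'(\sigma(x))}\int_0^1 \frac{f'(\sigma(x))-f'(t\sigma(x))}{\beta}\sigma(x)\,dt = \frac{\sigma(x)f'(\sigma(x))-f(\sigma(x))}{-f'(\sigma(x))},$$
the integral being evaluated using $f(0)=0$.

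With these in hand I would assemble the one-step estimate. Splitting $z_1-z_2=\big[(x_k-x_*)-F'(x_k)^\dagger(F(x_k)-F(x_*))\big]-(F'(x_k)^\dagger-F'(x_*)^\dagger)F(x_*)$ bounds $\|z_1-z_2\|$ by the linearization error above plus $\sqrt2\,\beta c(f'(\sigma(x_k))+1)/(-f'(\sigma(x_k)))$; multiplying by the condition-number bound produces the quadratic coefficient of \eqref{eq:q2} and part of the residual terms. The delicate point, and the step I expect to be the main obstacle, is the second summand $\|H_1^{-1}\|\,\|(H_1-H_2)(z_2-x_*)\|$, in which $z_2-x_*=-F'(x_*)^\dagger F(x_*)$. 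A crude estimate $\|H_1-H_2\|\,\|F'(x_*)^\dagger\|\,c$ produces a coefficient carrying $2\kappa$, which is \emph{too large} to reproduce the exact quotient defining $\rho$. Instead I would expand $H_1-H_2=F'(x_*)^*E+E^*F'(x_*)+E^*E$ with $E=F'(x_k)-F'(x_*)$, and crucially use $F'(x_*)F'(x_*)^\dagger=\Pi_{im(F'(x_*))}$, whose norm is at most $1$, so that $\|E^*F'(x_*)F'(x_*)^\dagger\|\le\|E\|$; together with $\|F'(x_*)\|=\kappa/\beta$ this gives $\|(H_1-H_2)F'(x_*)^\dagger\|\le\|E\|(\kappa+1+\beta\|E\|)$, i.e.\ the sharp factor $\kappa+1$ rather than $2\kappa$. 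Collecting every term then yields exactly
$$\|x_{k+1}-x_*\| \le A(\sigma(x_k))\,\sigma(x_k)^2 + B(\sigma(x_k))\,\sigma(x_k)^2 + C(\sigma(x_k))\,\sigma(x_k),$$
where $A,B,C$ are the three coefficients of \eqref{eq:q2} read as functions of $t$, and a direct computation shows $A(t)t+B(t)t+C(t)$ equals the quotient in the definition of $\rho$. Since that quotient is below $1$ and nondecreasing on $(0,\rho)$, an induction gives $\{x_k\}\subset B(x_*,r)$ with $\sigma(x_{k+1})<\sigma(x_k)$, hence $\sigma(x_k)\le q^k\sigma(x_0)\to0$ for $q$ the quotient at $\sigma(x_0)$; finally the monotonicity of $A,B,C$ established in the first part lets me bound $A(\sigma(x_k))\le A(\sigma(x_0))$ and likewise for $B,C$, which delivers \eqref{eq:q2}.
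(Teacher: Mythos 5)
Your argument is correct, but it is organized quite differently from the paper's: the paper does not reprove anything for this corollary --- it obtains it as the immediate specialization $c:=\|F(x_*)\|=0$ of Theorem~\ref{th:nt}, under which the residual terms of \eqref{eq:q2} vanish, the quotient defining $\rho$ reduces to the one displayed in the corollary, and hypothesis \textbf{h3} becomes vacuous (which is why it is omitted from the corollary's hypotheses). You instead re-derive the full one-step estimate with the residual terms kept, and that derivation reproduces, essentially lemma for lemma, the paper's proof of Theorem~\ref{th:nt}: the reduction via Proposition~\ref{pro:a1} and Lemma~\ref{lemma4} is Lemma~\ref{l:wdef}; the pseudo-inverse bounds via Lemma~\ref{lem:ban} are Lemma~\ref{wdns}; your expansion of $H_1-H_2$ using $F'(x_*)F'(x_*)^{\dagger}=\Pi_{im(F'(x_*))}$ to get the factor $\kappa+1$ rather than $2\kappa$ is exactly Lemma~\ref{pr:H}(iii); the monotonicity facts are Proposition~\ref{pr:incr101}; and the closing induction is Subsection~\ref{sec:proof}. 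Two remarks on economy. First, since the corollary assumes $F(x_*)=0$, one has $z_2-\mbox{prox}_{J}^{H_2}(z_2)=-F'(x_*)^{\dagger}F(x_*)=0$, so the second summand in Lemma~\ref{lemma4} vanishes identically; the ``delicate point'' you single out (sharp $\kappa+1$ versus crude $2\kappa$) therefore matters only for the general theorem, not for this statement, and likewise all the $\sqrt{2}\,c\beta$ terms drop out, so the whole estimate collapses to the condition-number factor times the linearization-error term. Second, your argument implicitly needs $\rho>0$; in the general theorem this is Proposition~\ref{pr:incr102} and requires \textbf{h3}, but with $c=0$ it is automatic because the quotient tends to $0$ as $t\to 0^{+}$. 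Nothing is wrong, but recognizing the statement as Theorem~\ref{th:nt} with $c=0$ would have replaced your entire construction by one line.
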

In order to prove Theorem \ref{th:nt} we need some results. From
now on, we assume that all the assumptions of Theorem \ref{th:nt}
hold.
\subsection{The majorant function } \label{sec:PMF}
Our first goal is to show that the constant $\delta$ associated
with $\Omega$ and the constants  $\nu$ and $\rho$
associated with the majorant function~$f$ are positive. Also, we
will prove some results related to the function~$f$.

We begin by noting that $\delta>0$, because $\Omega$ is an open
set and $x_*\in \Omega$.
\begin{proposition}  \label{pr:incr1}
The constant $ \nu $ is positive and $f'(t)<0$ for all $t\in (0, \nu).$
\end{proposition}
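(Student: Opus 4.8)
The plan is to prove the two assertions separately, both relying on the standing hypotheses h1) and h2) together with the continuity of $f'$ (which holds because $f$ is continuously differentiable). The key structural observation is that, since $f'$ is strictly increasing, the sublevel set $A:=\{t\in[0,R):f'(t)<0\}$ is automatically an interval containing $0$: if $f'(t)<0$ and $0\le s<t$, then strict monotonicity gives $f'(s)<f'(t)<0$, so $s\in A$ as well. Thus $A$ is a down-closed subset of $[0,R)$ and $\nu=\sup A$ is exactly its right endpoint.

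First I would establish that $\nu>0$. Since $f'(0)=-1<0$ by h1) and $f'$ is continuous at $0$, there exists $\epsilon>0$ with $f'(t)<0$ for all $t\in[0,\epsilon)$; hence $[0,\epsilon)\subseteq A$, which forces $\nu=\sup A\ge\epsilon>0$. This disposes of the positivity claim directly from continuity, without invoking monotonicity.

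For the second assertion, that $f'(t)<0$ for every $t\in(0,\nu)$, I would argue by contradiction using h2). Suppose some $t_0\in(0,\nu)$ satisfied $f'(t_0)\ge 0$. Because $f'$ is strictly increasing, every $s>t_0$ in the domain would satisfy $f'(s)>f'(t_0)\ge 0$, so no point beyond $t_0$ can lie in $A$; that is, $A\subseteq[0,t_0]$ and therefore $\nu=\sup A\le t_0$. This contradicts $t_0<\nu$. Consequently $f'(t)<0$ for all $t\in(0,\nu)$, equivalently $[0,\nu)\subseteq A$, which is the desired conclusion.

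I do not anticipate a genuine obstacle here: the entire argument is an elementary consequence of the continuity and strict monotonicity of $f'$, and the only point deserving care is the bookkeeping in the definition of the supremum $\nu$, namely that a down-closed sublevel set has its supremum as a right endpoint so that the strict inequality $f'(t)<0$ propagates to the whole open interval $(0,\nu)$. Note that the convexity part of h2) is not needed for this proposition; only strict monotonicity (and continuity) is used.
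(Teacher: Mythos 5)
Your proof is correct and follows essentially the same route as the paper: positivity of $\nu$ from the continuity of $f'$ together with $f'(0)=-1$, and the negativity of $f'$ on $(0,\nu)$ from the strict monotonicity in \textbf{h2} combined with the definition of $\nu$. You merely spell out in detail (via the down-closedness of the sublevel set and a contradiction argument) what the paper compresses into one sentence.
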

\begin{proof}
As $f'$ is continuous in $(0,R)$ and $f'(0)=-1,$ there exists
$\epsilon>0$ such that $f'(t)<0$ for all $t\in (0, \epsilon)$. Hence,
$\nu>0.$ Now, using {\bf  h2} and definition of $\nu$ the last part
of the proposition follows.
\end{proof}
\begin{proposition} \label{pr:incr101}
The following functions are positive and increasing:
\begin{itemize}
 \item[{\bf i)}] $[0,\,\nu ) \ni t \mapsto -1/f'(t);$
 \item[{\bf ii)}]  $[0,\, \nu) \ni t \mapsto -[f'(t)+1+\kappa]/f'(t);$
 \item[{\bf iii)}]  $(0,\, \nu) \ni t \mapsto [tf'(t)-f(t)]/t^2;$
\item[{\bf iv)}]  $(0,\, \nu) \ni t \mapsto [f'(t)+1]/t.$
\end{itemize}
As a consequence,
$$
(0,\, \nu)\ni t\mapsto \frac{[f'(t)+1+\kappa][tf'(t)-f(t)]}{[tf'(t)]^2},
\qquad (0,\, \nu)\ni t\mapsto
\frac{[f'(t)+1]^2}{[tf'(t)]^2},  \qquad (0,\, \nu)\ni t\mapsto
\frac{f'(t)+1}{t[f'(t)]^2},
$$  are also  positive and increasing  functions.
\end{proposition}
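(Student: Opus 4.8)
The plan is to establish the four positivity--monotonicity claims \textbf{i)}--\textbf{iv)} one at a time, and then obtain the three composite functions at the end simply by writing each as a product (or square) of the functions in \textbf{i)}--\textbf{iv)}, invoking the elementary fact that a product of positive increasing functions on an interval is again positive and increasing. Throughout I would use that $f'(t)<0$ on $(0,\nu)$ by Proposition~\ref{pr:incr1} (and $f'(0)=-1<0$ by \textbf{h1}), together with the strict monotonicity and convexity of $f'$ supplied by \textbf{h2}.

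For \textbf{i)}, since $f'$ is negative and strictly increasing on $[0,\nu)$, for $t_1<t_2$ we have $f'(t_1)<f'(t_2)<0$, whence $1/f'(t_1)>1/f'(t_2)$; thus $t\mapsto -1/f'(t)$ is increasing, and it is positive (its value at $0$ is $1$). For \textbf{ii)} I would rewrite
\[
-\frac{f'(t)+1+\kappa}{f'(t)}=-1+(1+\kappa)\left(-\frac{1}{f'(t)}\right),
\]
so that, since $1+\kappa>0$ and \textbf{i)} is already known, this function is increasing; as its value at $t=0$ equals $\kappa>0$, it stays positive on all of $[0,\nu)$.

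The heart of the argument is \textbf{iii)} and \textbf{iv)}, where convexity of $f'$ enters through Proposition~\ref{pr:conv.aux1}. For \textbf{iv)}, using $f'(0)=-1$ I would write $[f'(t)+1]/t=[f'(t)-f'(0)]/t$ and apply the first item of Proposition~\ref{pr:conv.aux1} to $\varphi=f'$ with $\tau=0$, obtaining that $t\mapsto[f'(t)+1]/t$ is increasing; positivity is immediate from $f'(t)>f'(0)=-1$ for $t>0$. For \textbf{iii)} the key step is the integral representation obtained from $f(0)=0$: writing $f(t)=\int_0^t f'(s)\,ds$ and substituting $s=\tau t$ yields
\[
\frac{tf'(t)-f(t)}{t^2}=\frac{1}{t}\int_0^1\big[f'(t)-f'(\tau t)\big]\,d\tau=\int_0^1\frac{f'(t)-f'(\tau t)}{t}\,d\tau.
\]
Proposition~\ref{pr:conv.aux1} applied to $\varphi=f'$ shows that, for each fixed $\tau\in[0,1]$, the integrand $t\mapsto[f'(t)-f'(\tau t)]/t$ is increasing, so the integral is increasing; positivity follows from $f'(t)-f'(\tau t)>0$ for $\tau\in[0,1)$ by strict monotonicity. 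I expect this integral representation together with the interchange with the monotonicity lemma to be the main technical obstacle; the rest is bookkeeping.

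Finally, the three composite functions follow by factoring. Labelling the functions of \textbf{i)}--\textbf{iv)} accordingly, one has
\[
\frac{[f'(t)+1+\kappa][tf'(t)-f(t)]}{[tf'(t)]^2}=\textbf{(ii)}\cdot\textbf{(i)}\cdot\textbf{(iii)},\qquad
\frac{[f'(t)+1]^2}{[tf'(t)]^2}=\big(\textbf{(iv)}\cdot\textbf{(i)}\big)^{2},\qquad
\frac{f'(t)+1}{t[f'(t)]^2}=\textbf{(iv)}\cdot\textbf{(i)}^{2},
\]
each being a product (or square) of the already-established positive increasing functions, and hence positive and increasing on $(0,\nu)$.
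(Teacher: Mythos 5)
Your proposal is correct and follows essentially the same route as the paper: items \textbf{i)} and \textbf{ii)} via the sign and monotonicity of $f'$, item \textbf{iv)} via Proposition~\ref{pr:conv.aux1} with $\tau=0$, item \textbf{iii)} via the integral representation $[tf'(t)-f(t)]/t^2=\int_0^1 [f'(t)-f'(\tau t)]/t\,d\tau$ combined with Proposition~\ref{pr:conv.aux1}, and the final three functions by factoring them into products of \textbf{i)}--\textbf{iv)}. The only difference is that you spell out details the paper calls ``immediate'' (e.g.\ the rewriting of \textbf{ii)} as $-1+(1+\kappa)(-1/f'(t))$), which is fine.
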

\begin{proof}
Items~{\bf i} and {\bf ii} are immediate, because ${\bf h1}$, ${\bf h2}$ and Proposition~\ref{pr:incr1}  imply that  $f'$ is strictly increasing  and $-1\leq f'(t)<0$ for all $t\in [0, \nu).$

Now, note that after some simple algebraic manipulations we have
  $$
  \frac{tf'(t)-f(t)}{t^2}=\int_{0}^{1}\frac{f'(t)-f'(\tau t)}{t}\; d\tau.
  $$
Hence, as $f'$ is strictly increasing $({\bf h2})$, we obtain that the function of  item~{\bf iii} is positive. Moreover,
   combining the last equation and  Proposition~\ref{pr:conv.aux1}  with $f'=\varphi$ and $\epsilon=\nu$, we conclude function of item~{\bf iii} is increasing.  So, item~{\bf iii} is proved.

Assumption $\bf h1$ and $\bf h2$ imply that the function of item~{\bf iv} is positive. Hence, to conclude item~{\bf iv} use  ${\bf h2}$, $f'(0)=-1$ and
Proposition~\ref{pr:conv.aux1} with $f'=\varphi,$ $\epsilon=\nu$ and
$\tau =0.$

To prove that the functions in the last part are positive and increasing combine items {\bf i}, {\bf ii} and {\bf iii} for the first function and  items {\bf i} and {\bf iv} for the second and third functions.
\end{proof}
\begin{proposition}\label{pr:incr102}
The constant $\rho $ is positive and there holds
$$
\frac{\left[f'(t)+1+\kappa\right]\left[tf'(t)-f(t)+c
 \beta(1+\sqrt{2})(f'(t)+1)\right]+c
 \beta\left[f'(t)+1\right]}{t[f'(t)]^2}<1, \qquad \forall \; t\in
(0, \,\rho).
$$
\end{proposition}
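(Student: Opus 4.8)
The plan is to denote by $g(t)$ the left-hand side of the claimed inequality and to establish two facts: that $g$ is increasing on $(0,\nu)$, and that $\lim_{t\to 0^+}g(t)<1$. Together these yield both assertions at once. By continuity $g<1$ on a right-neighbourhood of $0$, so $\rho>0$; and since $\rho$ is the supremum of the set $\{t\in(0,\nu):g(t)<1\}$, for any $t\in(0,\rho)$ there is an $s\in(t,\rho)$ with $g(s)<1$, whence $g(t)\le g(s)<1$ by monotonicity.

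First I would decompose $g$ into a sum of three nonnegative terms sharing the denominator $t[f'(t)]^2$, which is nonzero on $(0,\nu)$ by Proposition~\ref{pr:incr1}, namely
\[
g(t)=\underbrace{\frac{[f'(t)+1+\kappa][tf'(t)-f(t)]}{t[f'(t)]^2}}_{T_1}
+\underbrace{\frac{c\beta(1+\sqrt2)[f'(t)+1+\kappa][f'(t)+1]}{t[f'(t)]^2}}_{T_2}
+\underbrace{\frac{c\beta[f'(t)+1]}{t[f'(t)]^2}}_{T_3}.
\]
Each term is positive and increasing by Proposition~\ref{pr:incr101}: writing $T_1=t\cdot\frac{[f'(t)+1+\kappa][tf'(t)-f(t)]}{[tf'(t)]^2}$ exhibits it as the product of $t$ with the first consequence function; $T_3$ equals $c\beta$ times the third consequence function; and the factor of $T_2$ factors as $\left(\frac{-(f'(t)+1+\kappa)}{f'(t)}\right)\left(\frac{f'(t)+1}{t}\right)\left(\frac{-1}{f'(t)}\right)$, i.e. the product of items~{\bf ii}, {\bf iv} and {\bf i}, all positive and increasing. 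Since $c,\beta,\kappa\ge0$, the sum $g=T_1+T_2+T_3$ is thus positive and increasing on $(0,\nu)$.

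Then I would compute the limit at $0$ using $f(0)=0$, $f'(0)=-1$ and the continuity of $f'$. For $T_1$, writing $T_1=\frac{f'(t)+1+\kappa}{[f'(t)]^2}\left(f'(t)-\frac{f(t)}{t}\right)$ and using $\frac{f(t)}{t}\to f'(0)=-1$ gives $T_1\to\kappa\cdot 0=0$. For $T_2$ and $T_3$ the quotient $\frac{f'(t)+1}{t}=\frac{f'(t)-f'(0)}{t-0}$ tends to $D^+f'(0)$ by the second item of Proposition~\ref{pr:conv.aux1} applied to $\varphi=f'$, so $T_2\to c\beta(1+\sqrt2)\kappa\,D^+f'(0)$ and $T_3\to c\beta\,D^+f'(0)$. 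Adding these, $\lim_{t\to0^+}g(t)=[(1+\sqrt2)\kappa+1]\,c\beta\,D^+f'(0)$, which is precisely the quantity required to be $<1$ in hypothesis~{\bf h3}. This matching of the limit at $0$ with {\bf h3} is the crux of the argument; once it is secured, positivity of $\rho$ and the inequality throughout $(0,\rho)$ follow from the monotonicity of $g$ exactly as described in the first paragraph.
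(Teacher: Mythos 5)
Your proof is correct and follows essentially the same route as the paper: compute $\lim_{t\to 0^+}g(t)=c\beta[(1+\sqrt{2})\kappa+1]D^+f'(0)$ via Proposition~\ref{pr:conv.aux1}, invoke \textbf{h3} to get $\rho>0$, and then combine the supremum definition of $\rho$ with the monotonicity supplied by Proposition~\ref{pr:incr101}. The only difference is cosmetic: you spell out the increasing-function decomposition (your $T_1,T_2,T_3$) explicitly, whereas the paper leaves that step implicit by citing Proposition~\ref{pr:incr101}.
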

\begin{proof}
First, using $\bf h1$ and some algebraic manipulation gives
$$
\frac{tf'(t)-f(t)}{t}=\left[f'(t)-\displaystyle\frac{f(t)-f(0)}{t-0}\right], \qquad  \frac{f'(t)+1}{t}=\frac{f'(t)-f'(0)}{t-0}.
$$
Since $f'(0)=-1$ and $f'$ is convex,   last equations and  Proposition~\ref{pr:conv.aux1} lead  to
$$
\lim_{t \to 0}{[tf'(t)-f(t)]}/{t}=0,  \qquad \lim_{t \to 0} {[f'(t)+1]}/{t}=D^+f'(0),$$
which, combined with  $f'(0)=-1$ and simples calculus yields
\begin{align*}
&\lim_{t \to 0}
\frac{\left[f'(t)+1+\kappa\right]\left[tf'(t)-f(t)+c
 \beta(1+\sqrt{2})(f'(t)+1)\right]+c
 \beta\left[f'(t)+1\right]}{t[f'(t)]^2}\\
&=\kappa c\beta(1+\sqrt{2})D^+f'(0)+c\beta D^+f'(0)=c\beta[(1+\sqrt{2})\kappa+1]D^+f'(0).
\end{align*}
Now, using {\bf h3}, i.e.,  $[(1+\sqrt{2})\kappa+1]c\beta D^+f'(0)<1$, we conclude
that there exists a $\epsilon>0$ such that
$$
\frac{\left[f'(t)+1+\kappa\right]\left[tf'(t)-f(t)+c
 \beta(1+\sqrt{2})(f'(t)+1)\right]+c
 \beta\left[f'(t)+1\right]}{t[f'(t)]^2}<1, \qquad
 t\in
(0, \epsilon),
$$
So, $\epsilon\leq\rho$, which proves the first
statement.

To conclude the proof, we use the definition of $\rho$ and  Proposition~\ref{pr:incr101}.
\end{proof}
\subsection{Relationship of the majorant function with the non-linear function $F$} \label{sec:MFNLO}
In this part we will present the main relationships between
the majorant function $f$ and the function $F$ associated with the  problem \eqref{eq:p}. As usual
$\sigma(x)=\|x-x_*\|$.
\begin{lemma} \label{wdns}
Let $x \in \Omega$. If \,$\sigma(x)<\min\{\nu,\delta\}$, then
$H(x)=F'(x)^* F'(x) $ is invertible and the following
inequalities hold
$$
\|F'(x)^{\dagger}\|\leq \frac{-\beta}{f'(\sigma(x))}, \qquad
 \|F'(x)^{\dagger}-F'(x_*)^{\dagger}\|
<
\frac{-\sqrt{2}\beta[f'(\sigma(x))+1]}{f'(\sigma(x))}.
 $$
In particular, $H(x)=F'(x)^* F'(x)$ is invertible in $B(x_*, r)$.
\end{lemma}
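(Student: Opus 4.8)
The plan is to apply the perturbation bound for Moore--Penrose inverses from Lemma~\ref{lem:ban} with the choice $A = F'(x_*)$ and $B = F'(x)$, and then convert its operator estimates into the stated $f'$-estimates by means of the majorant condition \eqref{Hyp:MH}.

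First I would record the data needed for Lemma~\ref{lem:ban}. By hypothesis $A = F'(x_*)$ is injective with closed image and $\|A^\dagger\| = \beta$, while $B = F'(x)$ has closed image because $F'$ has closed image throughout $\Omega$ and $x\in\Omega$. The essential quantitative input is a bound on $\|A-B\|$. Specializing the majorant condition \eqref{Hyp:MH} to $\tau = 0$ and using $f'(0)=-1$ from \textbf{h1} gives
$$\beta\|F'(x)-F'(x_*)\| \le f'(\sigma(x)) - f'(0) = f'(\sigma(x))+1.$$
Since $\sigma(x)<\nu$, Proposition~\ref{pr:incr1} yields $f'(\sigma(x))<0$, so $f'(\sigma(x))+1<1$ and hence $\|A^\dagger\|\,\|A-B\| = \beta\|F'(x)-F'(x_*)\| < 1$. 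This is precisely the hypothesis required to invoke Lemma~\ref{lem:ban}.

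Applying Lemma~\ref{lem:ban} then shows that $B = F'(x)$ is injective; combined with its closed image this makes $H(x)=F'(x)^*F'(x)$ invertible (cf.\ \eqref{121212}), which is the first assertion. The lemma also supplies
$$\|F'(x)^\dagger\| \le \frac{\beta}{1-\beta\|F'(x)-F'(x_*)\|}, \qquad \|F'(x)^\dagger-F'(x_*)^\dagger\| \le \frac{\sqrt{2}\,\beta^2\|F'(x)-F'(x_*)\|}{1-\beta\|F'(x)-F'(x_*)\|}.$$
To finish I would substitute $\beta\|F'(x)-F'(x_*)\| \le f'(\sigma(x))+1$ into both estimates, using the monotonicity of $s\mapsto 1/(1-s)$ and $s\mapsto s/(1-s)$ on $[0,1)$ together with the identity $1-(f'(\sigma(x))+1) = -f'(\sigma(x)) > 0$. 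This turns the first estimate into $\|F'(x)^\dagger\| \le -\beta/f'(\sigma(x))$ and the second into $\|F'(x)^\dagger-F'(x_*)^\dagger\| \le -\sqrt{2}\,\beta[f'(\sigma(x))+1]/f'(\sigma(x))$, which are the two displayed inequalities; the strict form of the second holds whenever the majorant bound at $\tau=0$ is not attained (in particular when $x\neq x_*$), since $s\mapsto s/(1-s)$ is strictly increasing.

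Finally, for the \emph{in particular} clause I would observe that $\rho\le\nu$ and $r=\min\{\rho,\delta\}\le\min\{\nu,\delta\}$ directly from their definitions, so every $x\in B(x_*,r)$ satisfies $\sigma(x)<r\le\min\{\nu,\delta\}$; hence the hypothesis of the lemma holds and $H(x)$ is invertible on all of $B(x_*,r)$. I do not anticipate a serious obstacle: the argument is a clean application of Lemma~\ref{lem:ban}, and the only point demanding care is the sign bookkeeping for $f'$ (namely $f'<0$ on $(0,\nu)$), which is exactly what makes the denominators $-f'(\sigma(x))$ positive and allows the operator bounds of Lemma~\ref{lem:ban} to be rewritten in the stated $f'$-form.
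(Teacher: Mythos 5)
Your proposal is correct and follows essentially the same route as the paper's own proof: both apply Lemma~\ref{lem:ban} with $A=F'(x_*)$, $B=F'(x)$, verify its hypothesis by specializing the majorant condition \eqref{Hyp:MH} to $\tau=0$ together with \textbf{h1} and Proposition~\ref{pr:incr1}, and then rewrite the resulting perturbation bounds using $1-(f'(\sigma(x))+1)=-f'(\sigma(x))>0$, finishing the \emph{in particular} clause via $r\leq\min\{\nu,\delta\}$. The only blemish, shared with the paper itself, is the strict inequality in the second bound: both arguments really deliver $\leq$ rather than $<$ (and strictness genuinely fails at $x=x_*$, where both sides vanish), so your cautionary remark on this point is, if anything, more careful than the original.
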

\begin{proof}
Let $x \in \Omega$ such that \,$\sigma(x)<\min\{\nu,\delta\}$. Since $\sigma(x)<\nu$, by Proposition~\ref{pr:incr1}, $f'(\sigma(x))<0$. Using the definition of $\beta$, the inequality \eqref{Hyp:MH} and {\bf h1} we have
$$
\|F'(x_*)^{\dagger}\|\|F'(x)-F'(x_*)\|=\beta\|F'(x)-F'(x_*)\|\leq
f'(\sigma(x))-f'(0)=f'(\sigma(x))+1 < 1.
$$
Taking account that $F'(x_*)$  is injective, in view of Lemma~\ref{lem:ban},
 $F'(x)$ is injective. So, $H(x)$ is invertible. Moreover, again by Lemma~\ref{lem:ban}
 $$\|F'(x)^\dagger\|\leq \frac{\|F'(x_*)^\dagger\|}{1-\|F'(x_*)^\dagger\|\|F'(x_*)-F'(x)\|}, \quad \|F'(x_*)^\dagger-F'(x)^\dagger\|\leq \frac{\sqrt{2}\|F'(x_*)^\dagger\|^2\|F'(x_*)-F'(x)\|}{1-\|F'(x_*)^\dagger\|\|F'(x_*)-F'(x)\|}.$$
 Now, using the definition of $\beta$, inequality   \eqref{Hyp:MH}, {\bf h1} and that $\sigma(x)<\nu$,  we have
$$\frac{1}{1-\|F'(x_*)^{\dagger}\|\|F'(x)-F'(x_*)\|}\leq
\frac{1}{1-(f'(\sigma(x))-f'(0))}=\frac{-1}{f'(\sigma(x))}.
$$
Thus, combining the last  inequalities,  we obtain the desired bounds for $\|F'(x)^\dagger\|$ and \linebreak $\|F'(x)^{\dagger}-F'(x_*)^{\dagger}\|$.
The last part  follows by noting that $r\leq\min\{\nu,\delta\}$.
\end{proof}

To prove the convergence of sequence $\{x_k\}$ on Theorem~\ref{th:nt},  the following relations will be needed.
\begin{lemma}  \label{pr:H}
Let $x \in \Omega$. If  $\sigma(x)<\min\{\nu,\delta\}$, then
\begin{itemize}
 \item[{\bf i)}]  $\|H(x)\|^{1/2}\leq [{f'(\sigma(x))+1+\kappa}]/\beta$;
 \item[{\bf ii)}]  $\|H(x)^{-1}\|^{1/2}\leq {-\beta}/{[f'(\sigma(x))]}$;
 \item[{\bf iii)}]  $\beta\|(H(x)-H(x_*))F'(x_*)^{\dagger}\|\leq (f'(\sigma(x))+2+\kappa)(f'(\sigma(x))+1).$
\end{itemize}
\end{lemma}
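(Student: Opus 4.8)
The plan is to establish the three bounds directly from the definitions $H(x)=F'(x)^*F'(x)$ and $\beta=\|F'(x_*)^\dagger\|$, using the majorant inequality \eqref{Hyp:MH} at $\tau=0$ together with {\bf h1}, and leaning on the operator facts \eqref{121212} that relate $H$, $H^{-1}$, and the pseudo-inverse in the injective case. The crucial preliminary observation is that since $\sigma(x)<\min\{\nu,\delta\}$, Lemma~\ref{wdns} guarantees $F'(x)$ is injective with $H(x)$ invertible, so all the quantities below make sense and \eqref{121212} applies: in particular $\|H(x)^{-1}\|=\|(F'(x)^*F'(x))^{-1}\|=\|F'(x)^\dagger\|^2$.

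For item~{\bf ii}, I would simply combine the identity $\|H(x)^{-1}\|^{1/2}=\|F'(x)^\dagger\|$ with the first bound of Lemma~\ref{wdns}, namely $\|F'(x)^\dagger\|\leq -\beta/f'(\sigma(x))$; this is essentially immediate. For item~{\bf i}, the idea is to control $\|H(x)\|^{1/2}=\|F'(x)\|$ by a triangle-inequality argument: write $\|F'(x)\|\leq \|F'(x)-F'(x_*)\|+\|F'(x_*)\|$, bound the first term using \eqref{Hyp:MH} with $\tau=0$ and {\bf h1} to get $\beta\|F'(x)-F'(x_*)\|\leq f'(\sigma(x))+1$, and recall $\kappa=\beta\|F'(x_*)\|$. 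Multiplying through by $\beta$ then yields $\beta\|F'(x)\|\leq f'(\sigma(x))+1+\kappa$, which is the claim after dividing by $\beta$.

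The main work, and the step I expect to be the principal obstacle, is item~{\bf iii}, the bound on $\beta\|(H(x)-H(x_*))F'(x_*)^\dagger\|$. Here I would expand the difference of the two self-adjoint products by inserting and subtracting a cross term:
\[
H(x)-H(x_*)=F'(x)^*F'(x)-F'(x_*)^*F'(x_*)=F'(x)^*\bigl(F'(x)-F'(x_*)\bigr)+\bigl(F'(x)^*-F'(x_*)^*\bigr)F'(x_*).
\]
Applying this to $F'(x_*)^\dagger$ and using the triangle inequality splits the target into two pieces. In the first piece I bound $\|F'(x)\|$ using item~{\bf i} (so $\beta\|F'(x)\|\leq f'(\sigma(x))+1+\kappa$) and $\beta\|(F'(x)-F'(x_*))F'(x_*)^\dagger\|\leq \beta\|F'(x)-F'(x_*)\|\leq f'(\sigma(x))+1$ via \eqref{Hyp:MH}; in the second piece I use $\|F'(x)^*-F'(x_*)^*\|=\|F'(x)-F'(x_*)\|$ and $\|F'(x_*)F'(x_*)^\dagger\|\leq 1$ (projection onto $im(F'(x_*))$ from \eqref{1111}), together with $\kappa=\beta\|F'(x_*)\|$. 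Tracking the $\beta$ factors carefully, the two contributions combine to $(f'(\sigma(x))+1+\kappa)(f'(\sigma(x))+1)+(f'(\sigma(x))+1)$, which factors as $(f'(\sigma(x))+2+\kappa)(f'(\sigma(x))+1)$, exactly the asserted bound. The delicate point throughout is keeping the normalization by $\beta$ consistent so that each estimate lands in terms of $f'$ and $\kappa$ rather than the raw operator norms.
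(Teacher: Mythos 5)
Your plan is correct and follows essentially the same route as the paper: items \textbf{i} and \textbf{ii} via the triangle inequality, \eqref{121212} and Lemma~\ref{wdns}, and item \textbf{iii} via exactly the paper's decomposition $F'(x)^*(F'(x)-F'(x_*))F'(x_*)^{\dagger}+(F'(x)-F'(x_*))^*\Pi_{im(F'(x_*))}$, yielding $(\|F'(x)\|\|F'(x_*)^{\dagger}\|+1)\beta\|F'(x)-F'(x_*)\|\leq(f'(\sigma(x))+2+\kappa)(f'(\sigma(x))+1)$. One intermediate line has the normalization misplaced --- the correct grouping is $\|(F'(x)-F'(x_*))F'(x_*)^{\dagger}\|\leq\beta\|F'(x)-F'(x_*)\|$ (the $\beta$ coming from $\|F'(x_*)^{\dagger}\|$, not an extra prefactor) --- but your final accounting of the $\beta$ factors is right, so this is only a notational slip.
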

\begin{proof}
First, simple calculus, inequality in \eqref{Hyp:MH} and definitons of $\beta$ and $\kappa$ gives
\begin{equation}\label{a2012}
\beta\|F'(x)\|=\|F'(x_*)^{\dagger}\|\|F'(x)\|\leq \beta\|F'(x)-F'(x_*)\|+\beta\|F'(x_*)\|\leq
f'(\sigma(x))+1+\kappa.
 \end{equation}
As
$\|H(x)\|^{1/2}=\|F'(x)^*F'(x)\|^{1/2}=\|F'(x)\|$, the first
statement follows.

Now, to show item {\bf ii}, use definition $H$, last inequality
in \eqref{121212} and Lemma~\ref{wdns}.

For {\bf iii},  note that the definition
$H$,  some algebraic manipulations and \eqref{1111} gives
\begin{align*}
\beta\|(H(x)-H(x_*))F'(x_*)^{\dagger}\|&=\beta\|F'(x)^*(F'(x)-F'(x_*))F'(x_*)^{\dagger}+(F'(x)-F'(x_*))^*
\Pi_{im(F'(x_*))}\|\\
&\leq(\|F'(x)\|\|F'(x_*)^{\dagger}\|+1)\beta\|F'(x)-F'(x_*)\|,
\end{align*}
which, combined with   \eqref{a2012} and  inequality in \eqref{Hyp:MH}
imply the desired statement.
\end{proof}
\begin{remark}
Note that in the Lemmas~\ref{wdns} and \ref{pr:H}, we have used the fact that condition \eqref{Hyp:MH} holds only for $\tau=0$.
\end{remark}
It is convenient to study the linearization error of $F$ at a point in~$\Omega$, for which we define
\begin{equation}\label{eq:def.er}
  E_F(x,y):= F(y)-\left[ F(x)+F'(x)(y-x)\right],\qquad y,\, x\in \Omega.
\end{equation}
We will bound this error by the error in the linearization on the
majorant function $f$
\begin{equation}\label{eq:def.erf}
        e_f(t,u):= f(u)-\left[ f(t)+f'(t)(u-t)\right],\qquad t,\,u \in [0,R).
\end{equation}
\begin{lemma}  \label{pr:taylor}
Let $x \in \Omega$. If  $\sigma(x)< \delta$, then  $ \beta\|E_F(x, x_*)\|\leq
e_f(\sigma(x), 0). $
\end{lemma}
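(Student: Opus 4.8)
The plan is to represent the linearization error $E_F(x,x_*)$ as an integral along the segment joining $x_*$ to $x$ and then bound the integrand directly by the majorant condition \eqref{Hyp:MH}. First I would record the domain considerations that make this legitimate: since $\sigma(x)<\delta$ we have $x\in B(x_*,\delta)\subseteq\Omega$, and because the open ball is convex, the whole segment $\{x_*+\tau(x-x_*):\tau\in[0,1]\}$ lies in $\Omega$; moreover $\tau\sigma(x)\leq\sigma(x)<\delta\leq R$, so $f'$ is defined at every argument we shall use. Thus both the vector-valued integral and the majorant bound are available along the segment.

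Next, using the continuous differentiability of $F$ and the fundamental theorem of calculus applied to the map $\tau\mapsto F(x_*+\tau(x-x_*))$, I would write $F(x)-F(x_*)=\int_0^1 F'(x_*+\tau(x-x_*))(x-x_*)\,d\tau$. Substituting this into the definition \eqref{eq:def.er}, namely $E_F(x,x_*)=F(x_*)-F(x)-F'(x)(x_*-x)$, and expressing the constant term $F'(x)(x-x_*)$ as $\int_0^1 F'(x)(x-x_*)\,d\tau$, I obtain
$$E_F(x,x_*)=\int_0^1\left[F'(x)-F'(x_*+\tau(x-x_*))\right](x-x_*)\,d\tau.$$

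Then I would take norms, factor the constant $\|x-x_*\|=\sigma(x)$ out of the integral, multiply by $\beta$, and apply \eqref{Hyp:MH} to the integrand, which gives
$$\beta\|E_F(x,x_*)\|\leq\int_0^1\left[f'(\sigma(x))-f'(\tau\sigma(x))\right]\sigma(x)\,d\tau.$$
The remaining step is a routine evaluation of this integral: the first summand integrates to $\sigma(x)f'(\sigma(x))$, while for the second the substitution $s=\tau\sigma(x)$ together with $f(0)=0$ (from {\bf h1}) yields $\int_0^1 f'(\tau\sigma(x))\sigma(x)\,d\tau=f(\sigma(x))$. Hence the right-hand side equals $\sigma(x)f'(\sigma(x))-f(\sigma(x))$, which by \eqref{eq:def.erf} and $f(0)=0$ is precisely $e_f(\sigma(x),0)$, completing the proof.

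The argument is essentially mechanical once the integral representation is in place; the only point requiring genuine care is justifying that the entire segment from $x_*$ to $x$ remains in $\Omega$ so that $F'$ and the majorant bound hold along it, and that the arguments $\tau\sigma(x)$ stay inside the domain $[0,R)$ of $f'$. Both follow from convexity of the ball and the estimate $\sigma(x)<\delta\leq R$, so I do not anticipate any real obstacle beyond careful bookkeeping of the integral.
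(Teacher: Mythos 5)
Your proposal is correct and follows essentially the same route as the paper's proof: both represent $E_F(x,x_*)$ as $\int_0^1[F'(x)-F'(x_*+\tau(x-x_*))](x-x_*)\,d\tau$ (justified by convexity of $B(x_*,\delta)$), bound the integrand via \eqref{Hyp:MH}, and evaluate the resulting integral using $f(0)=0$ to obtain $\sigma(x)f'(\sigma(x))-f(\sigma(x))=e_f(\sigma(x),0)$. You merely make explicit the steps the paper compresses into ``simple manipulations'' and ``evaluating the above integral.''
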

\begin{proof}
 Since  $B(x_*, \delta)$ is convex,  we obtain that $x_*+\tau(x-x_*)\in B(x_*, \delta)$, for $0\leq \tau \leq 1$.
 Thus, as $F$ is continuously differentiable in $\Omega$, the definition of $E_F$ and some simple manipulations yield
$$
\beta\|E_F(x,x_*)\|\leq \int_0 ^1 \beta\left\|
[F'(x)-F'(x_*+\tau(x-x_*))]\right\|\,\left\|x_*-x\right\| \;
d\tau.
$$
From the last inequality and the assumption \eqref{Hyp:MH}, we
obtain
$$
\beta\|E_F(x,x_*)\| \leq \int_0 ^1
\left[f'\left(\sigma(x)\right)-f'\left(\tau\sigma(x)\right)\right]\sigma(x)\;d\tau.
$$
Evaluating the above integral and using the definition of $e_f$, the
statement follows.
\end{proof}
\begin{remark}
If the inequality in \eqref{Hyp:MH} holds only for $\tau=0$, then the upper  bound of $\beta\|E_F(x, x_*)\|$ in the previous Lemma becomes
$ e_f(\sigma(x), 0)+ 2(f(\sigma(x))+\sigma(x)) . $
\end{remark}

In particular, Lemma \ref{wdns} guarantees  that  $H(x)=F'(x)^*F'(x)$
is invertible in $B(x_*, r)$ and consequently, $F'(x)^{\dagger}$ and $\mbox{prox}_{J}^{H(x)}$ are well defined in this region. Hence, the  proximal Gauss-Newton iteration map is also well defined.  Let us call $\mathcal{G}_{F}$,  the
 proximal Gauss-Newton iteration map for $F$ in that region:
\begin{equation} \label{NFS}
  \begin{array}{rcl}
\mathcal{G}_{F}:B(x_*, r) &\to& \banacha\\
    x&\mapsto& \mbox{prox}_{J}^{H(x)}(G_{F}(x))
  \end{array},
\end{equation}
where,
\begin{equation}\label{NF}
H(x)=F'(x)^{*}F'(x), \qquad G_{F}(x)=x- F'(x)^{\dagger}F(x).
\end{equation}
Take  $x\in
B(x_*, r)$. Note that
the point computed by the proximal Gauss-Newton iteration, $\mathcal{G}_{F}(x)$, may not be an element of $B(x_*,
r)$, or may not even belong to the domain of $F$.   To
ensure that the Gauss-Newton iterations may be repeated indefinitely, this is
enough to guarantee that the method is well defined for  one iteration, as we will show
in the following result.
\begin{lemma} \label{l:wdef}
Let $x \in \Omega$. If \,$\sigma(x)<r$,  then $\mathcal{G}_F$ is well
defined and there holds
\begin{multline}\label{oo}
  \|\mathcal{G}_{F}(x)-x_*\| \leq
    \frac{[f'(\sigma(x))+1+\kappa][f'(\sigma(x))\sigma(x)-f(\sigma(x))]}{[\sigma(x)f'(\sigma(x))]^2}{\|x-x_*\|}^{2}+\\ \frac{ (1+\sqrt{2}) c \beta [f'(\sigma(x))+1]^2}{[\sigma(x)f'(\sigma(x))]^2}{\|x_k-x_*\|}^{2}+
    \frac{c\beta[(1+\sqrt{2})\kappa+1][f'(\sigma(x))+1]}{\sigma(x)[f'(\sigma(x))]^2}\|x-x_*\|.
\end{multline}
In particular,
\begin{equation}\label{oooo}
\|\mathcal{G}_{F}(x)-x_{*}\|< \|x-x_*\|.
\end{equation}
\end{lemma}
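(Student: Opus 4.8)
The plan is to reduce the whole estimate to a single application of Lemma~\ref{lemma4} and then substitute the bounds already established. First I would dispose of well-definedness: since $\sigma(x)<r\leq\min\{\nu,\delta\}$, Lemma~\ref{wdns} shows $F'(x)$ is injective and $H(x)=F'(x)^*F'(x)$ is invertible, so $F'(x)^{\dagger}$, $\mbox{prox}_{J}^{H(x)}$, and hence $\mathcal{G}_F(x)$ are defined. Next I would invoke Proposition~\ref{pro:a1} to obtain the fixed-point identity $x_*=\mbox{prox}_{J}^{H(x_*)}(G_F(x_*))$. Writing $\mathcal{G}_F(x)-x_*=\mbox{prox}_{J}^{H(x)}(G_F(x))-\mbox{prox}_{J}^{H(x_*)}(G_F(x_*))$ and applying Lemma~\ref{lemma4} with $H_1=H(x)$, $z_1=G_F(x)$, $H_2=H(x_*)$, $z_2=G_F(x_*)$ (so that $\mbox{prox}_{J}^{H_2}(z_2)=x_*$) gives
\[
\|\mathcal{G}_F(x)-x_*\|\leq \sqrt{\|H(x)\|\,\|H(x)^{-1}\|}\,\|G_F(x)-G_F(x_*)\|+\|H(x)^{-1}\|\,\|(H(x)-H(x_*))(G_F(x_*)-x_*)\|.
\]
It then remains to bound the two summands and verify that their sum is precisely the right-hand side of \eqref{oo}.

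For the first summand, items \textbf{i} and \textbf{ii} of Lemma~\ref{pr:H} give $\sqrt{\|H(x)\|\,\|H(x)^{-1}\|}\leq -[f'(\sigma(x))+1+\kappa]/f'(\sigma(x))$. The one genuinely non-routine step is the decomposition of $G_F(x)-G_F(x_*)$: using $F'(x)^{\dagger}F'(x)=\mbox{I}_{\banacha}$ (from \eqref{121212}, valid by injectivity) together with the linearization error \eqref{eq:def.er}, I would write $G_F(x)-x_*=F'(x)^{\dagger}[E_F(x,x_*)-F(x_*)]$ and hence
\[
G_F(x)-G_F(x_*)=F'(x)^{\dagger}E_F(x,x_*)-[F'(x)^{\dagger}-F'(x_*)^{\dagger}]F(x_*).
\]
Bounding the first piece by Lemma~\ref{pr:taylor} together with $e_f(\sigma(x),0)=f'(\sigma(x))\sigma(x)-f(\sigma(x))$ (which follows from \textbf{h1} and \eqref{eq:def.erf}), and the second by the second estimate of Lemma~\ref{wdns} with $\|F(x_*)\|=c$, yields the bound on the first summand.

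For the second summand I would use $G_F(x_*)-x_*=-F'(x_*)^{\dagger}F(x_*)$, so that $\|(H(x)-H(x_*))(G_F(x_*)-x_*)\|\leq \|(H(x)-H(x_*))F'(x_*)^{\dagger}\|\,c$; then item \textbf{iii} of Lemma~\ref{pr:H} bounds this factor and item \textbf{ii} bounds $\|H(x)^{-1}\|$. Adding the two summands over the common denominator $(f'(\sigma(x)))^2$ and collecting the $(f'(\sigma(x))+1)^2$ and $(f'(\sigma(x))+1)$ contributions—the cross term $\sqrt{2}c\beta\kappa(f'(\sigma(x))+1)$ from the first summand recombining with the $(1+\kappa)(f'(\sigma(x))+1)$ part of $(f'(\sigma(x))+2+\kappa)(f'(\sigma(x))+1)$ from the second—reproduces exactly the coefficients $(1+\sqrt{2})c\beta(f'(\sigma(x))+1)^2$ and $c\beta[(1+\sqrt{2})\kappa+1](f'(\sigma(x))+1)$. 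Substituting $\|x-x_*\|=\sigma(x)$ to match the powers of $\sigma(x)$ in the denominators then gives \eqref{oo}. Finally, for \eqref{oooo} I would divide \eqref{oo} by $\|x-x_*\|=\sigma(x)>0$ (the claim being vacuous if $x=x_*$); the resulting ratio coincides term-for-term with the expression that Proposition~\ref{pr:incr102} asserts is strictly less than $1$ on $(0,\rho)\supseteq(0,r)$, whence $\|\mathcal{G}_F(x)-x_*\|<\|x-x_*\|$.

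The main obstacle is the algebraic consolidation in the last step: the estimate naturally splits into four error contributions, and \eqref{oo} is an exact regrouping of them, so the proof hinges on checking that the mixed $\kappa$-terms and the two powers of $(f'(\sigma(x))+1)$ recombine precisely into the stated coefficients, and that this recombined quotient is \emph{literally} the one bounded in Proposition~\ref{pr:incr102}. Everything else is a direct substitution of Lemmas~\ref{wdns}, \ref{pr:H}, and \ref{pr:taylor}; the conceptual content lies entirely in the decomposition of $G_F(x)-G_F(x_*)$ that isolates the linearization error $E_F(x,x_*)$ from the residual term $F(x_*)$.
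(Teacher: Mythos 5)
Your proposal is correct and follows essentially the same route as the paper's own proof: well-definedness via Lemma~\ref{wdns}, the fixed-point identity from Proposition~\ref{pro:a1}, a single application of Lemma~\ref{lemma4}, the decomposition of $G_F(x)-G_F(x_*)$ isolating $E_F(x,x_*)$ from the residual $F(x_*)$, the bounds from Lemmas~\ref{pr:H} and \ref{pr:taylor}, and Proposition~\ref{pr:incr102} for the contraction estimate \eqref{oooo}. Your algebraic recombination of the $\kappa$-cross terms into the coefficients $(1+\sqrt{2})c\beta(f'(\sigma(x))+1)^2$ and $c\beta[(1+\sqrt{2})\kappa+1](f'(\sigma(x))+1)$ checks out exactly.
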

\begin{proof}
First, as $\|x-x_*\|<r$, it follows from Lemma \ref{wdns} that
$H(x)=F'(x)^* F'(x)$ is invertible; then  $G_F(x)$ and
$\mathcal{G}_F(x)$ are well defined. Now, as
$-F'(x_*)^*F(x_*)\in
\partial J(x_*)$ and $F '(x_*)$ is injective, it follows from
Proposition~\ref{pro:a1}, \eqref{NFS} and \eqref{NF} that
$x_*=\mbox{prox}_{J}^{H(x_*)}(G_F(x_*)).$ Hence,
$$\|
\mathcal{G}_F(x)-x_{*}\|=\|\mbox{prox}_{J}^{H(x)}(G_F(x))-\mbox{prox}_{J}^{H(x_*)}(G_F(x_*))\|,
$$
which, combined with  Lemma \ref{lemma4} yields
\begin{multline*}
  \| \mathcal{G}_F(x)-x_{*}\|\leq (\|H(x)\|\|H(x)^{-1}\|)^{1/2}\|G_F(x)-G_F(x_*)\| \\+\|H(x)^{-1}\|\|(H(x)-H(x_*))(G_F(x_*)-
  \mbox{prox}_{J}^{H(x_*)}(G_F(x_*))\|.
\end{multline*}
Using $x_*=\mbox{prox}_{J}^{H(x_*)}(G_F(x_*))$ and
\eqref{NF}, the last inequality becomes
\begin{multline*}\label{0012}
  \| \mathcal{G}_F(x)-x_{*}\|\leq (\|H(x)\|\|H(x)^{-1}\|)^{1/2}\|G_F(x)-G_F(x_*)\| \\+ \|H(x)^{-1}\|\|(H(x)-H(x_*))F'(x_*)^{\dagger}\|\|F(x_*)\|.
\end{multline*}
For simplicity, the notation defines the following terms:
\begin{equation}\label{aaa}
A(x,x_*)=(\|H(x)\|\|H(x)^{-1}\|)^{1/2}\|G_F(x)-G_F(x_*)\|
\end{equation}
 and
\begin{equation}\label{bbb}
B(x,x_*)=\|H(x)^{-1}\|\|(H(x)-H(x_*))F'(x_*)^{\dagger}\|\|F(x_*)\|.
\end{equation}
So, from the  three latter inequalities we have
\begin{equation}\label{d1}
\|{G}_F(x)-x_{*}\|\leq A(x,x^*)+ B(x,x^*).
\end{equation}
Now, we will obtain upper bounds of $A(x,x^*)$ and $B(x,x^*)$.
First, some algebraic manipulations and definitions in
\eqref{eq:def.er} and \eqref{NF} yield
\begin{align*}
 \|{G}_F(x)-G_F(x_{*})\|&=\|F'(x)^{\dagger}[F'(x)(x-x_*)-F(x)+F(x_*)]
+(F'(x_*)^{\dagger}-F'(x)^{\dagger})F(x_*)\|.\\ &
\leq\|F'(x)^{\dagger}\|\|E_{F}(x,x_*)\|
+\|F'(x_*)^{\dagger}-F'(x)^{\dagger}\| \|F(x_*)\|.
\end{align*}
Combining  last inequality,  Lemmas~\ref{wdns} and \ref{pr:taylor}
and definition of $c$, we have
$$\|
{G}_F(x)-G_F(x_{*})\|=\frac{e_f(\sigma(x),0)}{-f(\sigma(x))}+\frac{\sqrt{2}c\beta[f'(\sigma(x))+1]}{-f'(\sigma(x))}.
$$
So, the definition in \eqref{aaa}, last inequality  and
Lemma~\ref{pr:H}{\bf i}-{\bf ii}  imply
\begin{equation}\label{c21}
A(x,x^*)\leq \frac{{f'(\sigma(x))+1+\kappa}}{[f(\sigma(x))]^2}\left({e_f(\sigma(x),0)+\sqrt{2}c\beta[f'(\sigma(x))+1]}\right).
\end{equation}
On the other hand, from definition in \eqref{bbb}, items {\bf ii} and {\bf iii} of Lemma~\ref{pr:H} we have
\begin{equation}\label{c212}
B(x,x_*)\leq \frac{c\beta}{[f'(\sigma(x))]^2}(f'(\sigma(x))+2+\kappa)(f'(\sigma(x))+1).
\end{equation}
Hence, \eqref{d1},  \eqref{c21} and \eqref{c212}  imply
\begin{multline*}
\|\mathcal{G}_{F}(x)-x_*\| \leq
    \frac{[f'(\sigma(x))+1+\kappa]e_f(\sigma(x),0)}{[f'(\sigma(x))]^2}+
    \frac{(1+\sqrt{2})c\beta\left[f'(\sigma(x))+1\right]^2}{[f'(\sigma(x))]^2}+\\
    \frac{c\beta\left[(1+\sqrt{2})\kappa+1\right]\left[f'(\sigma(x))+1\right]}{[f'(\sigma(x))]^2},
\end{multline*}
which, combined with \eqref{eq:def.erf}, {\bf h1} and simple manipulation   yields to \eqref{oo}.

To end the proof first, note that the right-hand side of  \eqref{oo} is equivalent to
$$\left[\frac{\left[f'(\sigma(x))+1+\kappa\right]\left[\sigma(x)f'(\sigma(x))-f(\sigma(x))+c
 \beta(1+\sqrt{2})(f'(\sigma(x))+1)\right]+c
 \beta\left[f'(\sigma(x))+1\right]}{\sigma(x)[f'(\sigma(x))]^2}\right] \sigma(x).$$
On the other hand, as $x\in B(x_*,r)/\{x_*\}$, i.e., $0<\sigma(x)<r\leq\rho$ we apply the Proposition~\ref{pr:incr102} with $t=\sigma(x)$ to conclude that the quantity in the bracket above is less than one. So, \eqref{oooo} follows.

\end{proof}
\begin{remark}
If the inequality in \eqref{Hyp:MH} holds only for $\tau=0$, then \eqref{oo} becomes
\begin{multline*}
  \|\mathcal{G}_{F}(x)-x_*\| \leq
    \frac{\left[f'(\sigma(x))+1+\kappa\right]\left[f'(\sigma(x))\sigma(x)+f(\sigma(x))+2\sigma(x)\right]}{[\sigma(x)f'(\sigma(x))]^2}{\|x-x_*\|}^{2}+\\ \frac{ (1+\sqrt{2}) c \beta \left[f'(\sigma(x))+1\right]^2}{[\sigma(x)f'(\sigma(x))]^2}{\|x_k-x_*\|}^{2}+
    \frac{c\beta[(1+\sqrt{2})\kappa+1]\left[f'(\sigma(x))+1\right]}{\sigma(x)[f'(\sigma(x))]^2}\|x-x_*\|.
\end{multline*}
\end{remark}
\subsection{Proof of {\bf Theorem \ref{th:nt}}} \label{sec:proof}
First of all, note that the equation in \eqref{eq:DNS}  together
\eqref{NFS} and \eqref{NF} imply that the sequence $\{x_k\}$ satisfies
\begin{equation} \label{GF}
x_{k+1}=\mathcal{G}_F(x_k),\qquad k=0,1,\ldots \,.
\end{equation}
\begin{proof}
Since $x_0\in B(x_*,r)/\{x_*\},$ i.e., $0<\sigma(x_0)<r,$ by a combination of
Lemma~\ref{wdns}, the last inequality in Lemma~\ref{l:wdef} and an induction argument it is easy to see that  $\{x_k\}$  is well defined and remains in $B(x_*,r)$.

Now, our goal is to show that $\{x_k\}$ converges to $x_*$.
As $\{x_k\}$ is well defined and  contained in  $B(x_*,r)$,
 \eqref{GF} and Lemma \ref{l:wdef} leads to
\begin{multline*}
\|x_{k+1}-x_*\| \leq
\frac{[f'(\sigma(x_k))+1+\kappa][f'(\sigma(x_k))\sigma(x_k)-f(\sigma(x_k))]}{[\sigma(x_k)f'(\sigma(x_k))]^2}{\|x_k-x_*\|}^{2}+\\ \frac{ (1+\sqrt{2}) c \beta [f'(\sigma(x_k))+1]^2}{[\sigma(x_k)f'(\sigma(x_k))]^2}{\|x_k-x_*\|}^{2}+
    \frac{c\beta[(1+\sqrt{2})\kappa+1][f'(\sigma(x_k))+1]}{\sigma(x_k)[f'(\sigma(x_k))]^2}\|x_k-x_*\|,
\end{multline*}
for all $ k=0,1,\ldots.$. Using again  \eqref{GF} and the second part of Lemma \ref{l:wdef},
it is easy to conclude that
\begin{equation} \label{eq:icq2}
\sigma(x)=\|x_{k}-x_*\|< \|x_0-x_*\|=\sigma(x_0), \qquad \;k=1, 2 \ldots.
\end{equation}
Hence, by combining the last two inequalities with the last part of
Proposition~\ref{pr:incr101}, we obtain that
\begin{multline*}
\|x_{k+1}-x_*\| \leq
\frac{[f'(\sigma(x_0))+1+\kappa][f'(\sigma(x_0))\sigma(x_0)-f(\sigma(x_0))]}{[\sigma(x_0)f'(\sigma(x_0))]^2}{\|x_k-x_*\|}^{2}+\\ \frac{ (1+\sqrt{2}) c \beta [f'(\sigma(x_0))+1]^2}{[\sigma(x_0)f'(\sigma(x_0))]^2}{\|x_k-x_*\|}^{2}+
    \frac{c\beta[(1+\sqrt{2})\kappa+1][f'(\sigma(x_0))+1]}{\sigma(x_0)[f'(\sigma(x_0))]^2}\|x_k-x_*\|,
\end{multline*}
for all $ k=0,1,\ldots$, which is the inequality \eqref{eq:q2}.
Now, combining last inequality with \eqref{eq:icq2} we obtain
\begin{multline*}
\|x_{k+1}-x_*\| \leq
\bigg[ \frac{\left[f'(\sigma(x_0))+1+\kappa\right]\left[f'(\sigma(x_0))\sigma(x_0)-f(\sigma(x_0))+(1+\sqrt{2}) c \beta (f'(\sigma(x_0))+1)\right]}{\sigma(x_0)[f'(\sigma(x_0))]^2}+\\
    \frac{c\beta[f'(\sigma(x_0))+1]}{\sigma(x_0)[f'(\sigma(x_0))]^2}\bigg]\|x_k-x_*\|,
\end{multline*}
for all  $k=0,1,\ldots$. Applying Proposition
\ref{pr:incr102} with $t=\sigma(x_0)$ it is straightforward to
conclude from the latter inequality that $\{\|x_{k}-x_*\|\}$
converges to zero. So, $\{x_k\}$ converges to $x_*$.
\end{proof}
\section{Special cases} \label{sec:ec}
In this section, we present two special cases of
Theorem~\ref{th:nt}. The convergence theorem for
proximal Gauss-Newton method under Lipschitz condition and Smale's
theorem on proximal Gauss-Newton for analytic functions are included.
\subsection{Convergence result for Lipschitz condition}
In this section we show a correspondent theorem for Theorem
\ref{th:nt} under Lipschitz condition,  instead of the general
assumption \eqref{Hyp:MH}.
\begin{theorem}\label{th:ntqnnm}
Let $\Omega\subseteq \banacha$ be an open set,
$J:\Omega\to
\mathbb{R}\cup\{+\infty\}$ a proper, convex and lower semicontinuous
functional and $F:{\Omega}\to
\banachb$ a continuously differentiable function such that $F'$  has a closed image in $\Omega$. Let $x_* \in \Omega,$ $R>0$ and
$$
c:=\|F(x_*)\|, \qquad \beta:=\|F'(x_*)^{\dagger}\|,\qquad \kappa:=
\beta\left\|F'(x_*)\right\|\qquad \delta:=\sup \left\{ t\in [0, R):
B(x_*, t)\subset\Omega \right\}.
$$
Suppose that $-F'(x_*)^*F(x_*)\in \partial J(x_*)$, $F '(x_*)$ is
injective and there exists a $L>0$ such that
\begin{equation}\label{h10}
h:=[(1+\sqrt{2})\kappa+1] \,c\, \beta L<1, \qquad
\beta\left\|F'(x)-F'(x_*+\tau (x-x_*))\right\| \leq
L(1-\tau)\sigma(x),
\end{equation}
where $x\in B(x_*, \delta)$, $ \tau \in [0,1]$ and
$\sigma(x)=\|x-x_*\|$.
 Let
$$ r :=\min \left\{\frac{4+\kappa+2c(1+\sqrt{2})\beta  L-\sqrt{(4+\kappa+2 c(1+\sqrt{2})\beta  L)^2-8(1-h)}}{2
L}, \,\delta \right\}.$$
 Then, the proximal Gauss-Newton method for
solving \eqref{eq:p}, with starting point $x_0\in B(x_*, r)/\{x_*\}$
$$
x_{k+1}=\mbox{prox}_{J}^{H(x_k)}\big({x_k}-F'(x_k)^{\dagger}F(x_k)\big),\qquad
k=0,1\ldots,
$$
 is well defined, the generated sequence $\{x_k\}$ is
contained in $B(x_*,r)$, converges  to $x_*$ and
\begin{equation}\label{h11}
    \|x_{k+1}-x_*\| \leq
    \frac{\kappa L+2c(1+\sqrt{2})\beta L^2+L^2\sigma(x_0)}{2[1- L\sigma(x_0)]^2}{\|x_k-x_*\|}^{2}+
     \frac{[(1+\sqrt{2})\kappa+1]c\beta L}{[1- L\sigma(x_0)]^2}\|x_k-x_*\|,
\end{equation}
 for all $k=0,1,\ldots.$
\end{theorem}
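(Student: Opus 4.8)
The plan is to recognize Theorem~\ref{th:ntqnnm} as the instance of Theorem~\ref{th:nt} obtained from the affine majorant derivative $f'(t)=Lt-1$, and then merely to translate the abstract constants and the abstract rate \eqref{eq:q2} into the closed forms displayed in \eqref{h11}. Concretely, I would set
$$f(t)=\frac{L}{2}t^2-t,\qquad f'(t)=Lt-1,\qquad t\in[0,R).$$
The first task is to check that this $f$ is a legitimate majorant. Since $f'(t)-f'(\tau t)=L(1-\tau)t=L(1-\tau)\sigma(x)$ when $t=\sigma(x)$, the Lipschitz hypothesis in \eqref{h10} is exactly the majorant inequality \eqref{Hyp:MH} (in fact with equality). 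Conditions ${\bf h1}$ and ${\bf h2}$ are immediate: $f(0)=0$, $f'(0)=-1$, and $f'$ is affine with positive slope $L$, hence convex and strictly increasing. Because $f'$ is differentiable, $D^+f'(0)=f''(0)=L$, so ${\bf h3}$ reads $[(1+\sqrt{2})\kappa+1]\,c\,\beta L<1$, which is precisely the standing assumption $h<1$ of \eqref{h10}. Thus all hypotheses of Theorem~\ref{th:nt} hold for this $f$.

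Next I would evaluate the auxiliary constants. Since $f'(t)<0$ iff $t<1/L$, we get $\nu=\min\{1/L,R\}$. For $\rho$, the key algebraic simplifications are $f'(t)+1=Lt$ and $tf'(t)-f(t)=\tfrac{L}{2}t^2$; substituting these into the defining inequality of $\rho$, factoring $Lt$ out of the numerator and cancelling against the denominator $t[f'(t)]^2=t(Lt-1)^2$ reduces it, after multiplying by $2$, to the quadratic inequality in $u:=Lt$,
$$u^2-\big[4+\kappa+2c(1+\sqrt{2})\beta L\big]u+2(1-h)>0.$$
The constant term $2(1-h)$ is positive, while evaluating the left side at $u=1$ gives $-1-\kappa-2c(1+\sqrt{2})\beta L-2h<0$; hence the smaller root $u_-$ lies in $(0,1)$. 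Therefore $\rho=u_-/L<1/L\le\nu$, so the cap at $\nu$ is inactive, and dividing the root formula by $L$ shows that $\rho$ equals the first entry of the minimum defining $r$ in the statement. This identifies $r$ with the convergence radius of Theorem~\ref{th:nt}.

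Finally I would specialize the rate. Writing $t_0:=\sigma(x_0)$ and using $f'(t_0)+1=Lt_0$, $t_0f'(t_0)-f(t_0)=\tfrac{L}{2}t_0^2$ and $[t_0f'(t_0)]^2=t_0^2(1-Lt_0)^2$, the first and second coefficients of \eqref{eq:q2} combine into
$$\frac{\kappa L+2c(1+\sqrt{2})\beta L^2+L^2\sigma(x_0)}{2[1-L\sigma(x_0)]^2},$$
and the third collapses to $[(1+\sqrt{2})\kappa+1]c\beta L/[1-L\sigma(x_0)]^2$, which are exactly the two coefficients in \eqref{h11}. The assertions on well-definedness, containment in $B(x_*,r)$, and convergence then transfer verbatim from Theorem~\ref{th:nt}. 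The only nonroutine point is the quadratic analysis for $\rho$: the hard part will be confirming that the smaller root governs, i.e.\ that $u_-<1$ so that the abstract $\rho$ coincides with the explicit radius in the statement, and the sign check at $u=1$ under $h<1$ is precisely what secures this.
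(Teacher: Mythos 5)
Your proposal is correct and follows exactly the paper's own route: specializing Theorem~\ref{th:nt} to the majorant $f(t)=Lt^2/2-t$, verifying {\bf h1}--{\bf h3} from \eqref{h10}, and identifying $\nu$, $\rho$, $r$ and the rate coefficients by direct substitution. Your quadratic analysis in $u=Lt$ (including the sign check at $u=1$ showing the smaller root governs) simply makes explicit the computation the paper labels ``it is easy to see,'' and it checks out.
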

 \begin{proof}
It is immediate to prove that $F$, $x_*$ and $f:[0, \delta)\to
\mathbb{R}$ defined by $ f(t)=Lt^{2}/2-t, $ satisfy the inequality
\eqref{Hyp:MH}, conditions {\bf h1} and {\bf h2}. Since
$[(1+\sqrt{2})\kappa+1] \,c\, \beta  L<1$,
  the condition {\bf h3} also holds. In this case, it is easy to see that the
constants $\nu$  and $\rho$ as defined in Theorem~\ref{th:nt},
satisfy
$$0<\rho=\frac{4+\kappa+2c(1+\sqrt{2})\beta  L-\sqrt{(4+\kappa+2 c(1+\sqrt{2})\beta  L)^2-8(1-h)}}{2
L} \leq \nu=1/L ,$$ as a consequence, $ 0<r=\min \{\delta,\,\rho\}.
$ Therefore, as $F$, $J$, $r$, $f$ and $x_*$ satisfy all of the
hypotheses of  Theorem \ref{th:nt}, taking  $x_0\in B(x_*,
r)\backslash \{x_*\}$ the statements of the theorem follow from
Theorem~\ref{th:nt}.
\end{proof}
\begin{remark}
If the second inequality in \eqref{h10} holds only for $\tau=0$,
then an analogous theorem holds true. More specifically, if we
replace the definition of $r$ in above theorem by
$$ r :=\min \left\{\frac{-(4+3\kappa+2c(1+\sqrt{2})\beta  L)-\sqrt{(4+3\kappa+2 c(1+\sqrt{2})\beta L)^2+8(1-h)}}{2
L}, \,\delta \right\}.$$
 then all the statements of the previous theorem are valid with exception of inequality
\eqref{h11}, which in this case
 becomes
$$
    \|x_{k+1}-x_*\| \leq
    \frac{3\kappa L+2c(1+\sqrt{2})\beta L^2+3L^2\sigma(x_0)}{2[1- L\sigma(x_0)]^2}{\|x_k-x_*\|}^{2}+
     \frac{[(1+\sqrt{2})\kappa+1]c\beta L}{[1- L\sigma(x_0)]^2}\|x_k-x_*\|,
$$
 for all $k=0,1,\ldots.$
\end{remark}
For the zero-residual problems, i.e., $c=0$, the Theorem \ref{th:ntqnnm} becomes:
\begin{corollary}\label{cor:li1}
Let $\Omega\subseteq \banacha$ be an open set,
$J:\Omega\to
\mathbb{R}\cup\{+\infty\}$ a proper, convex and lower semicontinuous
functional and $F:{\Omega}\to
\banachb$ a continuously differentiable function such that $F'$  has a closed image in $\Omega$. Let $x_* \in \Omega,$ $R>0$ and
$$
\beta:=\|F'(x_*)^{\dagger}\|,\qquad \kappa:=
\beta\left\|F'(x_*)\right\|\qquad \delta:=\sup \left\{ t\in [0, R):
B(x_*, t)\subset\Omega \right\}.
$$
Suppose that  $F(x_*)=0$, $0 \in \partial J(x_*)$, $F '(x_*)$ is
injective and there exists a $L>0$ such that
$$
\beta\left\|F'(x)-F'(x_*+\tau (x-x_*))\right\| \leq
L(1-\tau)\sigma(x),
$$
where $x\in B(x_*, \delta)$, $ \tau \in [0,1]$ and
$\sigma(x)=\|x-x_*\|$. Let
$$ r :=\min \left\{\frac{4+\kappa-\sqrt{(4+\kappa)^2-8}}{2
L}, \,\delta \right\}.$$ Then, the proximal Gauss-Newton method for
solving \eqref{eq:p}, with starting point $x_0\in B(x_*, r)/\{x_*\}$
$$
x_{k+1}=\mbox{prox}_{J}^{H(x_k)}\big({x_k}-F'(x_k)^{\dagger}F(x_k)\big),\qquad
k=0,1\ldots,
$$
 is well defined, the generated sequence $\{x_k\}$ is
contained in $B(x_*,r)$, converges to $x_*$ and
$$
    \|x_{k+1}-x_*\| \leq
    \frac{\kappa L+L^2\sigma(x_0)}{2[1- L\sigma(x_0)]^2}{\|x_k-x_*\|}^{2},
$$
 for all $k=0,1,\ldots.$
 \end{corollary}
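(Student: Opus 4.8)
The plan is to obtain Corollary~\ref{cor:li1} as the direct specialization of Theorem~\ref{th:ntqnnm} to the zero-residual case $c=0$, so essentially no new work is needed beyond substitution. First I would verify that the hypotheses of Theorem~\ref{th:ntqnnm} reduce to those stated here. Since $F(x_*)=0$ gives $c=\|F(x_*)\|=0$, the first-order condition $-F'(x_*)^*F(x_*)\in\partial J(x_*)$ becomes $0\in\partial J(x_*)$, which is precisely the assumption imposed. The Lipschitz-type majorant inequality is identical, and the scalar $h=[(1+\sqrt{2})\kappa+1]\,c\,\beta L$ now equals $0$, so the requirement $h<1$ is automatically satisfied and needs no separate check.

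Next I would specialize the radius and the estimate. Substituting $c=0$ (hence $h=0$) into the formula for $r$ in Theorem~\ref{th:ntqnnm}, every term carrying the factor $2c(1+\sqrt{2})\beta L$ vanishes and $8(1-h)$ becomes $8$, so the radius collapses to
\[
r=\min\left\{\frac{4+\kappa-\sqrt{(4+\kappa)^2-8}}{2L},\,\delta\right\},
\]
which is exactly the radius claimed in the corollary. Likewise, setting $c=0$ in \eqref{h11} annihilates the linear term $\tfrac{[(1+\sqrt{2})\kappa+1]c\beta L}{[1-L\sigma(x_0)]^2}\|x_k-x_*\|$ together with the summand $2c(1+\sqrt{2})\beta L^2$ in the numerator of the quadratic term, leaving precisely
\[
\|x_{k+1}-x_*\|\leq\frac{\kappa L+L^2\sigma(x_0)}{2[1-L\sigma(x_0)]^2}\|x_k-x_*\|^2.
\]
As a consistency check one may instead route through Corollary~\ref{col:pc1} with the majorant $f(t)=Lt^2/2-t$: then $f'(t)=Lt-1$, so $f'(\sigma(x_0))+1=L\sigma(x_0)$, $f'(\sigma(x_0))\sigma(x_0)-f(\sigma(x_0))=L\sigma(x_0)^2/2$, and the quotient in Corollary~\ref{col:pc1} simplifies to the same coefficient, confirming the stated bound.

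Thus all conclusions—well-definedness of the iteration, containment of $\{x_k\}$ in $B(x_*,r)$, convergence to $x_*$, and the quadratic estimate—follow immediately from Theorem~\ref{th:ntqnnm} once $c=0$ is inserted. Because this is a pure substitution, I do not anticipate any genuine obstacle. The only point deserving a moment of care is confirming that the radius is well defined and positive, i.e. that the square root is real and the numerator is positive. This holds because $\kappa=\beta\|F'(x_*)\|=\|F'(x_*)^\dagger\|\,\|F'(x_*)\|\geq\|F'(x_*)^\dagger F'(x_*)\|=\|I_{\banacha}\|=1$ (using the injectivity of $F'(x_*)$ and \eqref{121212}), so $4+\kappa\geq 5$ and $(4+\kappa)^2\geq 25>8$, whence $\sqrt{(4+\kappa)^2-8}$ is real and strictly less than $4+\kappa$, giving $r>0$.
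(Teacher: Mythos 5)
Your proposal is correct and takes the same route as the paper: Corollary~\ref{cor:li1} is stated there as the immediate $c=0$ specialization of Theorem~\ref{th:ntqnnm}, with the radius and the error bound obtained by exactly the substitutions you carry out (every term carrying $c$ vanishes and $8(1-h)$ becomes $8$). Your additional checks---the consistency verification through Corollary~\ref{col:pc1} with $f(t)=Lt^2/2-t$, and the observation that $\kappa\geq 1$ guarantees the square root is real and $r>0$---are sound, though the paper leaves them implicit since positivity of the radius is already inherited from the theorem.
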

\subsection{Convergence result under Smale's condition }
In this section we present a correspondent theorem to Theorem
\ref{th:nt} under Smale's condition. For more details, see
\cite{MR1895083,MR1651750,S86}. First we will prove two auxiliary Lemmas.

\begin{lemma} \label{lemma:qc1}
Let $\Omega\subseteq \banacha$ be an open set,
$F:{\Omega}\to \banachb$ an analytic function and
\begin{equation} \label{eq:SmaleCond}
\gamma := \sup _{ n
> 1 }\beta\left\| \frac {F^{(n)}(x_*)}{n
!}\right\|^{1/(n-1)}<+\infty,
\end{equation} where $ \beta:=\|F'(x_*)^{\dagger}\|$.
 Suppose that
$x_*\in \Omega$  and  $B(x_{*},
1/\gamma) \subset \Omega$. Then, for all $x\in B(x_{*}, 1/\gamma)$
there holds
$$
\beta\|F''(x)\| \leqslant  (2\gamma)/( 1- \gamma \|x-x_*\|)^3.
$$
\end{lemma}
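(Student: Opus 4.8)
The plan is to exploit the analyticity of $F$ directly, expanding $F''$ as a convergent power series centered at $x_*$ and then bounding it term by term using Smale's condition \eqref{eq:SmaleCond}. Since $F$ is analytic on $\Omega\supseteq B(x_*,1/\gamma)$, for each $x$ in that ball its second derivative admits the expansion obtained by differentiating the Taylor series of $F$ twice:
$$
F''(x) = \sum_{n=2}^{\infty} \frac{F^{(n)}(x_*)}{(n-2)!}(x-x_*)^{n-2}.
$$
Taking norms and applying the triangle inequality for the series yields
$$
\|F''(x)\| \le \sum_{n=2}^{\infty} \frac{\|F^{(n)}(x_*)\|}{(n-2)!}\,\|x-x_*\|^{n-2}.
$$

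Next I would invoke the definition of $\gamma$. For every $n>1$ the supremum bound gives $\beta\|F^{(n)}(x_*)/n!\|^{1/(n-1)}\le\gamma$, hence $\beta\|F^{(n)}(x_*)\|\le n!\,\gamma^{n-1}$. Substituting this, multiplying the previous display by $\beta$, and using $n!/(n-2)!=n(n-1)$ gives
$$
\beta\|F''(x)\| \le \sum_{n=2}^{\infty} n(n-1)\,\gamma^{n-1}\|x-x_*\|^{n-2} = \gamma\sum_{n=2}^{\infty} n(n-1)\,u^{\,n-2},
$$
where $u:=\gamma\|x-x_*\|\in[0,1)$ because $x\in B(x_*,1/\gamma)$.

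Finally I would identify the remaining scalar series as the second derivative of the geometric series: since $\sum_{n=0}^{\infty}u^{n}=1/(1-u)$ for $|u|<1$, differentiating twice termwise gives $\sum_{n=2}^{\infty}n(n-1)u^{\,n-2}=2/(1-u)^3$. Combining this with the preceding display yields
$$
\beta\|F''(x)\| \le \frac{2\gamma}{(1-u)^3} = \frac{2\gamma}{(1-\gamma\|x-x_*\|)^3},
$$
which is the asserted inequality.

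The only point requiring genuine care is the justification of the term-by-term operations: that the Taylor series of $F$ may be differentiated twice termwise and that norms may be passed inside the sum. Both are licensed by the estimate $\beta\|F^{(n)}(x_*)\|\le n!\,\gamma^{n-1}$, which forces the majorizing scalar series to have radius of convergence $1/\gamma$; consequently all the series above converge absolutely and uniformly on compact subsets of $B(x_*,1/\gamma)$, and on that ball the condition $u<1$ makes every manipulation legitimate. I expect this convergence bookkeeping, rather than any single computation, to be the main technical obstacle.
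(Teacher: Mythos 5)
Your proof is correct and takes essentially the same approach as the paper: the paper's own ``proof'' merely defers to Lemma~21 of \cite{MAX3}, and the argument there is exactly yours --- Taylor-expand $F''$ at $x_*$, use the definition of $\gamma$ to get $\beta\|F^{(n)}(x_*)\|\leq n!\,\gamma^{n-1}$, and sum $\sum_{n\geq 2} n(n-1)u^{n-2}=2/(1-u)^3$ for $u=\gamma\|x-x_*\|<1$. The convergence bookkeeping you flag is resolved the same way there, since the bound on the derivatives forces the Taylor series to have radius of convergence at least $1/\gamma$, so all termwise manipulations are valid on $B(x_*,1/\gamma)$.
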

\begin{proof}
The proof follows the same pattern as the proof of Lemma~21 of
\cite{MAX3}.
\end{proof}

The next result provides a condition which is easier to check than
 \eqref{Hyp:MH},  for two-times continuously differentiable functions.

\begin{lemma} \label{lc}
Let $\Omega\subseteq \banacha$ be an open set, $x_*\in \Omega$  and
$F:{\Omega}\to \banachb$  be twice continuously differentiable on
$\Omega$. If there exists a  twice
continuously differentiable function \mbox{$f:[0,R)\to \mathbb {R}$} such that
 \begin{equation} \label{eq:lc2}
\beta\|F''(x)\|\leqslant f''(\|x-x_*\|),
\end{equation}
for all $x\in B(x_*,R)\cap \Omega$, then $F$ and $f$
satisfy \eqref{Hyp:MH}.
\end{lemma}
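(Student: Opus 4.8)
The plan is to represent the difference $F'(x)-F'(x_*+\tau(x-x_*))$ as a line integral of the Hessian $F''$ along the segment joining $x_*+\tau(x-x_*)$ to $x$, and then to substitute the pointwise bound \eqref{eq:lc2}. Concretely, fix $x\in B(x_*,\delta)$ and $\tau\in[0,1]$, and write $\sigma:=\sigma(x)=\|x-x_*\|$. Since $F$ is twice continuously differentiable, the map $s\mapsto F'(x_*+s(x-x_*))$ is $C^1$ on $[0,1]$, and the chain rule gives $\frac{d}{ds}F'(x_*+s(x-x_*))=F''(x_*+s(x-x_*))(x-x_*)$. Applying the fundamental theorem of calculus on $[\tau,1]$ then yields the identity $F'(x)-F'(x_*+\tau(x-x_*))=\int_\tau^1 F''(x_*+s(x-x_*))(x-x_*)\,ds$.

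Next I would take norms on both sides, multiply by $\beta$, and insert the hypothesis. For each $s\in[\tau,1]$ the point $x_*+s(x-x_*)$ lies at distance $s\sigma\le\sigma<\delta$ from $x_*$, so it belongs to $B(x_*,\delta)\subseteq\Omega$; moreover $s\sigma<\delta\le R$ shows it lies in $B(x_*,R)\cap\Omega$, the region where \eqref{eq:lc2} is valid. Since $\|x_*+s(x-x_*)-x_*\|=s\sigma$, the bound \eqref{eq:lc2} gives $\beta\|F''(x_*+s(x-x_*))\|\le f''(s\sigma)$, and hence $\beta\|F'(x)-F'(x_*+\tau(x-x_*))\|\le\int_\tau^1 f''(s\sigma)\,\sigma\,ds$. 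The substitution $u=s\sigma$ (so $du=\sigma\,ds$, with limits $\tau\sigma$ and $\sigma$) transforms the right-hand side into $\int_{\tau\sigma}^{\sigma}f''(u)\,du=f'(\sigma)-f'(\tau\sigma)$, which is precisely the right-hand side of \eqref{Hyp:MH}. This establishes \eqref{Hyp:MH} for all $x\in B(x_*,\delta)$ and $\tau\in[0,1]$.

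This argument is essentially a one-variable computation, so there is no deep obstacle; the only point demanding care is the domain bookkeeping, namely checking that the entire segment $\{x_*+s(x-x_*):s\in[\tau,1]\}$ remains inside $B(x_*,R)\cap\Omega$ so that \eqref{eq:lc2} may legitimately be applied under the integral sign. This follows from the convexity of balls together with the relations $s\sigma\le\sigma<\delta$ and $\delta\le R$ (the latter being immediate from the definition of $\delta$), which I would state explicitly before invoking \eqref{eq:lc2}. I would also note that the $C^2$ regularity of $F$ is what justifies both the integral representation and the interchange of norm and integral.
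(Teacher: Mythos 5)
Your proof is correct and is essentially the argument the paper intends: the paper simply defers to Lemma~22 of \cite{MAX3}, whose proof is exactly this representation of $F'(x)-F'(x_*+\tau(x-x_*))$ as $\int_\tau^1 F''(x_*+s(x-x_*))(x-x_*)\,ds$ followed by the bound \eqref{eq:lc2} and the substitution $u=s\sigma$. Your explicit domain bookkeeping (the segment stays in $B(x_*,\delta)\subseteq B(x_*,R)\cap\Omega$ since $s\sigma\le\sigma<\delta\le R$) is a welcome addition rather than a deviation.
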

\begin{proof}
The proof follows the same pattern as the proof of Lemma~22 of
\cite{MAX3}.
\end{proof}

\begin{theorem}\label{theo:Smale}
Let $\Omega\subseteq \banacha$ be an open set,
$J:\Omega\to
\mathbb{R}\cup\{+\infty\}$ a proper, convex and lower semicontinuous
functional and $F:{\Omega}\to
\banachb$ an analytic function such that $F'$  has a closed image in~$\Omega$. Let $x_* \in \Omega,$ $R>0$ and
$$
c:=\|F(x_*)\|, \qquad \beta:=\|F'(x_*)^{\dagger}\|,\qquad \kappa:=
\beta\left\|F'(x_*)\right\|\qquad \delta:=\sup \left\{ t\in [0, R):
B(x_*, t)\subset\Omega \right\}.
$$
Suppose that $-F'(x_*)^*F(x_*)\in \partial J(x_*)$, $F '(x_*)$ is
injective and
$$ h=2\,c\,
\gamma\beta[(1+\sqrt{2})\kappa+1]
 <1,
$$
recall that $\gamma := \sup _{ n
> 1 }\beta\left\| \frac {F^{(n)}(x_*)}{n
!}\right\|^{1/(n-1)}<+\infty$.
Let the constants $a=\gamma c \beta$, $b=(1+\sqrt{2})\gamma
c \beta$,
\begin{equation}\label{rhoo}
\bar{\rho}:=\inf \bigg\{s \in({\sqrt{2}}/{2},1): p(s):=
-4s^4+(1-\kappa+a+b(\kappa-1))s^3+(3+\kappa+a+b(\kappa-1))s^2+(b-1)s+b<0\bigg\},
\end{equation}
$$ r :=\min  \left\{(1-\bar{\rho})/\gamma, \,\delta \right\}.$$
Then, the proximal Gauss-Newton method for solving \eqref{eq:p},
with starting point $x_0\in B(x_*, r)/\{x_*\}$
$$
x_{k+1}=\mbox{prox}_{J}^{H(x_k)}\big({x_k}-F'(x_k)^{\dagger}F(x_k)\big),\qquad
k=0,1\ldots,
$$
 is well defined, the generated sequence $\{x_k\}$ is
contained in $B(x_*,r)$, converges to $x_*$ and
\begin{multline} \label{eq:q233}
    \|x_{k+1}-x_*\| \leq
    \frac{1+(\kappa-1)(1-\gamma \sigma(x_0))^2}{[1-2(1-\gamma \sigma(x_0))^2]^2}{\|x_k-x_*\|}^{2}+
 \frac{(1+\sqrt{2})\beta c\gamma^2(2-\gamma\sigma(x_0))^2}{[1-2(1-\gamma \sigma(x_0))^2]^2}{\|x_k-x_*\|}^{2}+\\
 \frac{c\beta[(1+\sqrt{2})\kappa+1](2-\gamma\sigma(x_0))(1-\gamma\sigma(x_0))^2}{[1-2(1-\gamma \sigma(x_0))^2]^2}\|x_k-x_*\|,
\end{multline}
 for all $k=0,1,\ldots.$
 \end{theorem}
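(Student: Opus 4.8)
The plan is to reduce Theorem~\ref{theo:Smale} to the main Theorem~\ref{th:nt} by exhibiting the classical $\gamma$-majorant function and checking that all of its hypotheses hold, exactly as was done for the Lipschitz case in Theorem~\ref{th:ntqnnm}. Concretely, I would take $f:[0,1/\gamma)\to\mathbb{R}$ defined by
\[
f(t)=\frac{t}{1-\gamma t}-2t,
\]
so that $f'(t)=(1-\gamma t)^{-2}-2$ and $f''(t)=2\gamma(1-\gamma t)^{-3}$. A direct computation gives $f(0)=0$ and $f'(0)=-1$, which is {\bf h1}, while $f''>0$ and $f'''>0$ on $[0,1/\gamma)$ show that $f'$ is strictly increasing and convex, which is {\bf h2}.

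To verify the majorant inequality \eqref{Hyp:MH}, I would invoke the two auxiliary lemmas of this subsection: Lemma~\ref{lemma:qc1} yields $\beta\|F''(x)\|\le 2\gamma/(1-\gamma\|x-x_*\|)^3=f''(\|x-x_*\|)$ for all $x\in B(x_*,1/\gamma)$, and then Lemma~\ref{lc} converts this second-order bound into precisely \eqref{Hyp:MH}. Condition {\bf h3} is immediate once one notes that, since $f'$ is differentiable, $D^+f'(0)=f''(0)=2\gamma$; thus {\bf h3} reads $[(1+\sqrt{2})\kappa+1]\,c\,\beta\,(2\gamma)<1$, which is exactly the standing hypothesis $h<1$. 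At this point $F$, $x_*$ and $f$ satisfy every assumption of Theorem~\ref{th:nt}.

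It remains to identify the constants, and this is the step I expect to be the main obstacle. Solving $f'(t)<0$ amounts to $(1-\gamma t)^2>1/2$, giving $\nu=(1-\sqrt{2}/2)/\gamma$. To show that the radius $\rho$ from Theorem~\ref{th:nt} equals $(1-\bar\rho)/\gamma$, I would introduce the substitution $s:=1-\gamma t$, which maps $t\in(0,\nu)$ onto $s\in(\sqrt{2}/2,1)$, and record the identities $f'(t)+1=(1-s^2)/s^2$, $f'(t)+1+\kappa=[1+(\kappa-1)s^2]/s^2$, and $tf'(t)-f(t)=(1-s)^2/(\gamma s^2)$. Substituting these into the rational inequality defining $\rho$ and clearing the common factor $(1-s)/(\gamma s^4)$ reduces that inequality, after expansion, to $p(s)<0$ with $p$ the quartic in \eqref{rhoo}; this polynomial manipulation is the only genuinely laborious computation. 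Because Proposition~\ref{pr:incr102} guarantees the left-hand side is increasing in $t$, the inequality holds on an initial interval, and translating back through $s=1-\gamma t$ gives $\rho=(1-\bar\rho)/\gamma$, hence $r=\min\{(1-\bar\rho)/\gamma,\delta\}$.

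With all hypotheses verified and the constants matched, Theorem~\ref{th:nt} delivers that the method is well defined, stays in $B(x_*,r)$ and converges to $x_*$, together with estimate \eqref{eq:q2}. The final step is cosmetic: writing $s_0:=1-\gamma\sigma(x_0)$ and using $1-s_0^2=\gamma\sigma(x_0)(2-\gamma\sigma(x_0))$ together with the identity for $tf'(t)-f(t)$ recorded above, one rewrites the three coefficients appearing in \eqref{eq:q2} in closed form to obtain \eqref{eq:q233}, completing the proof.
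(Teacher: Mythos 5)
Your proposal is correct and follows essentially the same route as the paper's own proof: the same majorant function $f(t)=t/(1-\gamma t)-2t$, verification of {\bf h1}--{\bf h3} (with $D^+f'(0)=f''(0)=2\gamma$ matching the hypothesis $h<1$), the combination of Lemma~\ref{lemma:qc1} and Lemma~\ref{lc} to obtain \eqref{Hyp:MH}, the substitution $s=1-\gamma t$ turning the defining inequality for $\rho$ into $p(s)<0$ so that $\rho=(1-\bar\rho)/\gamma$, and the final appeal to Theorem~\ref{th:nt}. In fact your write-up is somewhat more explicit than the paper's (which omits the intermediate identities such as $tf'(t)-f(t)=(1-s)^2/(\gamma s^2)$ and the polynomial expansion), and the substitution you outline does reproduce the quartic $p$ exactly.
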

\begin{proof}  Consider the real
function $f:[0,1/\gamma) \to \mathbb{R}$ defined by
$$
f(t)=\frac{t}{1-\gamma t}-2t.
$$
It is straightforward to show that $f$ is analytic and that
$$
f(0)=0, \quad f'(t)=1/(1-\gamma t)^2-2, \quad f'(0)=-1, \quad
f''(t)=(2\gamma)/(1-\gamma t)^3, \quad f^{n}(0)=n!\,\gamma^{n-1},
$$
for $n\geq 2$. It follows from the last equalities that $f$
satisfies {\bf h1}  and  {\bf h2}. Since $h=2\gamma\,c\, \beta
[(1+\sqrt{2})\kappa+1]
 <1$,
  the condition {\bf h3} also holds. Now, as
$f''(t)=(2\gamma)/(1-\gamma t)^3$ combining Lemmas~\ref{lemma:qc1}
and \ref{lc},
 we conclude that $F$ and $f$ satisfy \eqref{Hyp:MH}
with $R=1/\gamma$. In this case,
$$\nu=(2-\sqrt{2})/2\gamma<1/\gamma.$$
Now, we will obtain the constant  $\rho$ as defined in Theorem
\ref{th:nt}. For simplicity, consider the following change of
variable
$$s=1-\gamma t.$$
Then, $t=(1-s)/\gamma.$ Moreover, if $t$ satisfies
$0<t<\nu=(2-\sqrt{2})2\gamma$, then  $\sqrt{2}/2<s<1$. Hence,
determine the constant $\rho$ as defined in Theorem \ref{th:nt}  is
equivalent to determine the constant $s$ such that
\[
\bar{\rho}=\inf \bigg\{s \in({\sqrt{2}}/{2},1): p(s)=
-4s^4+(1-\kappa+a+b(\kappa-1))s^3+(3+\kappa+a+b(\kappa-1))s^2+(b-1)s+b<0\bigg\},
\]
where $a=\gamma c \beta$ and $b=(1+\sqrt{2})\gamma c \beta$.  Thus,
taking in account the change of variable, we have
$\rho=(1-\bar{\rho})/\gamma$ and
$$ r =\min  \left\{(1-\bar{\rho})/\gamma, \,\delta \right\}.$$
Therefore, as $F$, $J$, $r$, $f$ and $x_*$ satisfy all hypothesis of
Theorem \ref{th:nt}, taking $x_0\in B(x_*, r)\backslash \{x_*\}$,
the statements of the theorem follow from Theorem \ref{th:nt}.\end{proof}
\begin{remark}\label{r1}
Fixed the numerical values of  $a$, $b$ and $\kappa$, as $p(1)=h-1<0$, it is easy to compute  $\bar{\rho}$, defined in \eqref{rhoo}. Moreover, as $f(t)={t}/{(1-\gamma t)}-2t$
is the majorant function, by
Proposition~\ref{pr:incr101},  it follows that $p$ is decreasing in
$({\sqrt{2}}/{2},1)$.
\end{remark}


For the zero-residual problems, i.e., $c=0$, the Theorem \ref{theo:Smale} becomes:
\begin{corollary}\label{cor:tt}
Let $\Omega\subseteq \banacha$ be an open set,
$J:\Omega\to
\mathbb{R}\cup\{+\infty\}$ a proper, convex and lower semicontinuous
functional and $F:{\Omega}\to
\banachb$ an analytic function such that $F'$  has a closed image in~$\Omega$. Let $x_* \in \Omega,$ $R>0$ and
$$
\beta:=\|F'(x_*)^{\dagger}\|,\qquad \kappa:=
\beta\left\|F'(x_*)\right\|\qquad \delta:=\sup \left\{ t\in [0, R):
B(x_*, t)\subset\Omega \right\}.
$$
Suppose that  $F(x_*)=0$, $0 \in \partial J(x_*)$, $F '(x_*)$ is
injective and
$$
\gamma := \sup _{ n
> 1 }\beta\left\| \frac {F^{(n)}(x_*)}{n
!}\right\|^{1/(n-1)}<+\infty.
$$
Let be given the positive constants
\[
\bar{\rho}:=\inf \bigg\{s \in({\sqrt{2}}/{2},1):
p(s):=-4s^3+(1-\kappa)s^2+(3+\kappa)s-1<0\bigg\},\quad r :=\min
\left\{(1-\bar{\rho})/\gamma, \,\delta \right\}.
\]
Then, the proximal Gauss-Newton method for solving \eqref{eq:p},
with starting point $x_0\in B(x_*, r)/\{x_*\}$
$$
x_{k+1}=\mbox{prox}_{J}^{H(x_k)}\big({x_k}-F'(x_k)^{\dagger}F(x_k)\big),\qquad
k=0,1\ldots,
$$
is well defined, the generated sequence $\{x_k\}$ is
contained in $B(x_*,r)$, converges to $x_*$ and
$$
    \|x_{k+1}-x_*\| \leq
    \frac{1+(\kappa-1)(1-\gamma \sigma(x_0))^2}{[1-2(1-\gamma
    \sigma(x_0))^2]^2}{\|x_k-x_*\|}^{2}, \qquad k=0,1,\ldots.
$$
\end{corollary}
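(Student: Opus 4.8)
The plan is to obtain Corollary~\ref{cor:tt} as the specialization of Theorem~\ref{theo:Smale} to the zero-residual case $c=0$, checking that every quantity appearing in Theorem~\ref{theo:Smale} degenerates exactly to the expression claimed here. First I would verify the hypotheses. Since $F(x_*)=0$ forces $c=\|F(x_*)\|=0$, we also have $F'(x_*)^*F(x_*)=0$, so the stationarity assumption $-F'(x_*)^*F(x_*)\in\partial J(x_*)$ collapses to $0\in\partial J(x_*)$, which is precisely what is assumed; the injectivity of $F'(x_*)$, the analyticity of $F$, and the finiteness of $\gamma$ carry over verbatim. The residual smallness condition $h=2c\gamma\beta[(1+\sqrt{2})\kappa+1]<1$ becomes $0<1$ and is therefore automatic, so no extra requirement survives.

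Next I would track the defining polynomial. With $c=0$ the auxiliary constants of Theorem~\ref{theo:Smale} satisfy $a=\gamma c\beta=0$ and $b=(1+\sqrt{2})\gamma c\beta=0$, whence the quartic there reduces to
$$
p(s)=-4s^4+(1-\kappa)s^3+(3+\kappa)s^2-s=s\bigl[-4s^3+(1-\kappa)s^2+(3+\kappa)s-1\bigr].
$$
Since the infimum defining $\bar{\rho}$ is taken over $s\in(\sqrt{2}/2,1)$, where $s>0$, the sign of $p(s)$ coincides with the sign of the cubic factor. Hence the set $\{s:p(s)<0\}$ is unchanged when $p$ is replaced by $-4s^3+(1-\kappa)s^2+(3+\kappa)s-1$, so $\bar{\rho}$, and consequently $r=\min\{(1-\bar{\rho})/\gamma,\,\delta\}$, agree with the quantities in the present statement.

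Finally I would specialize the error estimate. In the right-hand side of \eqref{eq:q233} the second and third summands each carry an explicit factor $c$, so they vanish identically when $c=0$, leaving only the first term
$$
\frac{1+(\kappa-1)(1-\gamma\sigma(x_0))^2}{[1-2(1-\gamma\sigma(x_0))^2]^2}\,\|x_k-x_*\|^2,
$$
which is exactly the bound asserted in Corollary~\ref{cor:tt}. With the hypotheses verified, the polynomial matched, and the estimate reduced, all assumptions of Theorem~\ref{theo:Smale} hold for the same $F$, $J$, $x_*$, $f$, and $r$; its conclusions — well-definedness of the iteration, containment of $\{x_k\}$ in $B(x_*,r)$, convergence to $x_*$, and the displayed quadratic bound — then transfer directly. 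The only point requiring genuine care, though it is elementary, is confirming the factorization $p(s)=s\cdot(\text{cubic})$ together with the observation that restricting to $s>0$ preserves the strict-negativity set, so that the two definitions of $\bar{\rho}$ truly coincide; everything else is mechanical substitution.
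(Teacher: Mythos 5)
Your proposal is correct and matches the paper's own route: the paper obtains Corollary~\ref{cor:tt} precisely by setting $c=0$ in Theorem~\ref{theo:Smale}, which kills the terms involving $a$, $b$, and the residual, exactly as you describe. Your explicit check that the quartic factors as $s$ times the cubic, so that the two definitions of $\bar{\rho}$ agree on $(\sqrt{2}/2,1)$, is the one detail the paper leaves implicit, and you handle it correctly.
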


\section{Final remark}
Under a majorant condition, we present a new local convergence
analysis of the proximal Gauss-Newton method for solving penalized
nonlinear least squares problem. It would also be interesting to
present a semi-local convergence analysis of the proximal
Gauss-Newton method, under a majorant condition, for the problem on
consideration. This local analysis will be performed in the future.


\def\cprime{$'$}

\end{document}